\documentclass[12pt]{amsart}
\usepackage{amsmath, amssymb, amsthm, latexsym}
\usepackage{amssymb}
\usepackage{latexsym}
\usepackage[center]{caption}
\usepackage[OT2,T1]{fontenc}
\DeclareSymbolFont{cyrletters}{OT2}{wncyr}{m}{n}
\DeclareMathSymbol{\Sha}{\mathalpha}{cyrletters}{"58}
\usepackage{tikz}
\newcounter{braid}
\newcounter{strands}
\pgfkeyssetvalue{/tikz/braid height}{1cm}
\pgfkeyssetvalue{/tikz/braid width}{1cm}
\pgfkeyssetvalue{/tikz/braid start}{(0,0)}
\pgfkeyssetvalue{/tikz/braid colour}{black}
\pgfkeys{/tikz/strands/.code={\setcounter{strands}{#1}}}

\makeatletter
\def\cross{%
  \@ifnextchar^{\message{Got sup}\cross@sup}{\cross@sub}}

\def\cross@sup^#1_#2{\render@cross{#2}{#1}}

\def\cross@sub_#1{\@ifnextchar^{\cross@@sub{#1}}{\render@cross{#1}{1}}}

\def\cross@@sub#1^#2{\render@cross{#1}{#2}}

\def\render@cross#1#2{
  \def\strand{#1}
  \def\crossing{#2}
  \pgfmathsetmacro{\cross@y}{-\value{braid}*\braid@h}
  \pgfmathtruncatemacro{\nextstrand}{#1+1}
  \foreach \thread in {1,...,\value{strands}}
  {
    \pgfmathsetmacro{\strand@x}{\thread * \braid@w}
    \ifnum\thread=\strand
    \pgfmathsetmacro{\over@x}{\strand * \braid@w + .5*(1 - \crossing) * \braid@w}
    \pgfmathsetmacro{\under@x}{\strand * \braid@w + .5*(1 + \crossing) * \braid@w}
    \draw[braid] \pgfkeysvalueof{/tikz/braid start} +(\under@x pt,\cross@y pt) to[out=-90,in=90] +(\over@x pt,\cross@y pt -\braid@h);
    \draw[braid] \pgfkeysvalueof{/tikz/braid start} +(\over@x pt,\cross@y pt) to[out=-90,in=90] +(\under@x pt,\cross@y pt -\braid@h);
    \else
    \ifnum\thread=\nextstrand
    \else
     \draw[braid] \pgfkeysvalueof{/tikz/braid start} ++(\strand@x pt,\cross@y pt) -- ++(0,-\braid@h);
    \fi
   \fi
  }
  \stepcounter{braid}
}

\tikzset{braid/.style={double=\pgfkeysvalueof{/tikz/braid colour},double distance=1pt,line width=2pt,white}}

\newcommand{\braid}[2][]{%
  \begingroup
  \pgfkeys{/tikz/strands=2}
  \tikzset{#1}
  \pgfkeysgetvalue{/tikz/braid width}{\braid@w}
  \pgfkeysgetvalue{/tikz/braid height}{\braid@h}
  \setcounter{braid}{0}
  \let\sigma=\cross
  #2
  \endgroup
}
\makeatother

\input xypic

\swapnumbers
\newtheorem{theorem}{Theorem}

\newtheorem{lemma}[theorem]{Lemma}
\newtheorem{corollary}[theorem]{Corollary}

\makeatletter
\makeatother

\def\Z{\mathbb{Z}}

\def\C{\mathbb{C}}

\def\Q{\mathbb{Q}}

\def\R{\mathbb{R}}
\def\C{\mathbb{C}}

\def\G{\mathbb{G}}

\def\qed{\hfill$\square$\medskip}

\def\Zpk{\mathbb{Z}/p^{k}}
\def\Zpk1{\mathbb{Z}/p^{k-1}}

\newcommand{\rref}[1]{(\ref{#1})}

\newcommand{\beg}[2]{\begin{equation}\label{#1}#2\end{equation}}
\def\r{\rightarrow}

\def\Zf{\mathbf{Z}}

\def\sl2{\widetilde{SL_{2}(\Z)}}

\title[$\Sha$ and a homotopy limit problem]{On the arithmetic of elliptic curves and a
homotopy limit problem}
\author{Igor Kriz}
\thanks{Igor Kriz was supported by NSF grant DMS 1102614 and by a grant from the Simons Foundation}


\begin{document}

\maketitle

\begin{abstract}
In this note, I study a comparison map between a motivic and \'{e}tale cohomology group of an elliptic curve over
$\Q$ just outside the range of Voevodsky's isomorphism theorem. I show that the property of an appropriate version of
the map being an isomorphism is equivalent to certain arithmetical properties of the elliptic curve.
\end{abstract}

\vspace{3mm}

\section{Introduction}\label{s1}

The most well known theorem of motivic homotopy theory is Voevodsky's proof of the Beilinson-Lichtenbaum 
and Bloch-Kato
conjectures \cite{voev}.
In one form (\cite{voev}, Theorem 6.17), this result states that for a pointed smooth simplicial scheme $X$,
the natural homomorphism
\beg{evoev}{\widetilde{H}^{p}_{Mot}(X,\Z/\ell(q))\r \widetilde{H}^{p}_{\text{\em\'{e}t}}(X,\Z/\ell(q))}
is an isomorphism for $p\leq q$ and a monomorphism for $p=q+1$. 

\vspace{3mm}
The purpose of the present note
is to study the map \rref{evoev} when $X$ is an elliptic curve over $\Q$, $p=2$, $q=1$. In this case,
we know from Voevodsky's theorem that \rref{evoev} is a monomorphism.

\begin{theorem}\label{tm1}
Let $X=E$ be an elliptic curve defined over $\Q$. Then the canonical homomorphism
$$\widetilde{H}^{2}_{Mot}(E,\mathbf{Z}_\ell(1))\r \widetilde{H}^{2}_{\text{\em\'{e}t}}(E,\mathbf{Z}_\ell(1))$$
where $\mathbf{Z}_\ell(1)$ denotes the homotopy limit of $\Z/\ell^k(1)$ in the category of motives (resp. \'{e}tale
motives)
always has an uncountable cokernel.
\end{theorem}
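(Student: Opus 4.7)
The plan is to identify both cohomology groups arithmetically, recognize the cokernel as a Tate module of the Weil--Ch\^atelet group of $E$, and then show this Tate module is always uncountable.

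\emph{Identifying the groups.} On the motivic side, $H^{2}_{Mot}(E,\Z(1))\cong\mathrm{Pic}(E)\cong\Z\oplus E(\Q)$ and $H^{3}_{Mot}(E,\Z(1))=CH^{1}(E,-1)=0$, so the coefficient sequence for $\Z(1)\to\Z(1)\to\Z/\ell^{k}(1)$ yields $\widetilde{H}^{2}_{Mot}(E,\Z/\ell^{k}(1))\cong\mathrm{Pic}(E)/\ell^{k}$. By Mordell--Weil these are finite, so the homotopy-limit $\lim^{1}$ vanishes and $\widetilde{H}^{2}_{Mot}(E,\mathbf{Z}_{\ell}(1))\cong\Z_{\ell}\oplus E(\Q)^{\wedge}_{\ell}$. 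On the \'etale side, the Kummer sequence (split via the origin of $E$), together with the classical identification $\mathrm{Br}(E)\cong\mathrm{Br}(\Q)\oplus H^{1}(\Q,E)$ (Leray with section, where $H^{1}(\Q,E)$ is the Weil--Ch\^atelet group), yields
\[
H^{2}_{\text{\em\'{e}t}}(E,\mu_{\ell^{k}})\cong\Z/\ell^{k}\oplus E(\Q)/\ell^{k}\oplus\mathrm{Br}(\Q)[\ell^{k}]\oplus H^{1}(\Q,E)[\ell^{k}].
\]
The reduced cohomology removes the $\mathrm{Br}(\Q)$-summand (it pulls back isomorphically from the basepoint); taking the inverse limit produces
\[
\widetilde{H}^{2}_{\text{\em\'{e}t}}(E,\mathbf{Z}_{\ell}(1))\cong\Z_{\ell}\oplus E(\Q)^{\wedge}_{\ell}\oplus T_{\ell}H^{1}(\Q,E).
\]
The comparison map is the identity on the first two summands, so its cokernel is exactly $T_{\ell}H^{1}(\Q,E)$.

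\emph{Uncountability.} From the Kummer sequence in continuous Galois cohomology, $T_{\ell}H^{1}(\Q,E)\cong H^{1}_{cts}(\Q,T_{\ell}E)/E(\Q)^{\wedge}_{\ell}$, so its $\Z_{\ell}$-rank equals $\dim_{\Q_{\ell}}H^{1}(\Q,V_{\ell}E)-r$, where $r=\mathrm{rank}\,E(\Q)$ and $V_{\ell}E=T_{\ell}E\otimes\Q_{\ell}$. The strict inequality $\dim H^{1}(\Q,V_{\ell}E)>r$ follows by a local-global argument at $v=\ell$: local $p$-adic Hodge theory gives $\dim H^{1}(\Q_{\ell},V_{\ell}E)/H^{1}_{f}(\Q_{\ell},V_{\ell}E)\geq 1$, and this local ``non-Selmer'' quotient is realized by a global class (the obstruction being controlled by a finite-dimensional Selmer group via Poitou--Tate). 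Therefore the cokernel has $\Z_{\ell}$-rank $\geq 1$, so it contains $\Z_{\ell}$, which is uncountable.

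The main obstacle, in my view, is the final local-global step establishing $\dim H^{1}(\Q,V_{\ell}E)>r$ unconditionally. The identifications in the first paragraph are routine but require careful handling of reduced versus unreduced cohomology and the compatibility of the Kummer sequence with the motivic-to-\'etale comparison after passage to the homotopy limit; the uncountability of the cokernel ultimately comes from the fact that global Galois cohomology of $V_{\ell}E$ strictly exceeds the Mordell--Weil contribution, the extra dimension coming from the local geometry of $E$ at $v=\ell$.
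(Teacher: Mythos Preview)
Your identification of the cokernel with $T_\ell H^1(\Q,E)\cong H^1(\Q,T_\ell E)/E(\Q)^{\wedge}_\ell$ is correct and agrees with the paper's setup (this is essentially the content of the paper's diagram (\ref{ed3}) and the Brauer-group discussion at the end of Section~\ref{shard}). The divergence from the paper occurs entirely in your ``Uncountability'' paragraph.

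Your strategy is to show $\dim_{\Q_\ell}H^1(\Q,V_\ell E)>r$ by producing a single global class whose restriction at $\ell$ falls outside $H^1_f(\Q_\ell,V_\ell E)$. You correctly flag this as the weak point, and indeed the justification you give (``the obstruction being controlled by a finite-dimensional Selmer group via Poitou--Tate'') is not a proof: for finite $S$, the paper's own Lemma~\ref{lpt} shows that the image of $H^1_{\text{\'et}}(\Z[S^{-1}],V_\ell E)$ in $H^1(\Q_\ell,V_\ell E)$ is exactly one-dimensional, and nothing prevents that image from coinciding with $H^1_f$. You would then need to enlarge $S$ and argue that the image grows, which is precisely the step you have not carried out. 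More fundamentally, your framing treats $\dim_{\Q_\ell}H^1(\Q,V_\ell E)$ as a finite number to be compared with $r$; in fact it is infinite, and the paper exploits this directly rather than trying to squeeze out a single extra dimension at $\ell$.

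The paper's argument is quite different and avoids the local analysis at $\ell$ altogether. It runs Poitou--Tate (\ref{ept}) with $S$ equal to \emph{all} primes and $M=E[\ell^k]$, and uses Chebotarev density to produce, for each $k$, infinitely many primes $p$ at which Frobenius acts trivially on $E[\ell^k]$, so that $H^1(\Q_p,E[\ell^k])$ is nonzero. The inverse limit over $k$ of the restricted products $\prod'_p H^1(\Q_p,E[\ell^k])$ then contains an uncountable submodule, and exactness of the limit of (\ref{ept}) forces $H^1(\Q,T_\ell E)$ itself to be uncountable. Since $E(\Q)^{\wedge}_\ell$ is a finitely generated $\Z_\ell$-module, the quotient is uncountable. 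This argument is both simpler and stronger than yours: it shows the cokernel has infinite $\Z_\ell$-rank, not merely rank $\geq 1$, and it requires no $p$-adic Hodge theory or delicate Selmer-group bookkeeping.
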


The situation changes, however, if we work with finite models. For a large enough set $S$ of primes in $\Z$, 
an elliptic curve $E$ over $\Q$ has a smooth projective
model over $\Z[S^{-1}]$, which we will denote by $E[S^{-1}]$.

\begin{theorem}\label{t0}
Let $X=E$ be an elliptic curve defined over $\Q$. Then the canonical homomorphism
$$\lim_{\begin{array}[t]{c}\r\\[-.5ex]S\end{array}}\widetilde{H}^{2}_{Mot}(E[S^{-1}],\mathbf{Z}_\ell(1))\r 
\lim_{\begin{array}[t]{c}\r\\[-.5ex]S\end{array}}\widetilde{H}^{2}_{\text{\em\'{e}t}}(E[S^{-1}]
,\mathbf{Z}_\ell(1))$$
is an isomorphism if and only if $\Sha(E)_{(\ell)}$ is finite and $rank_\Q(E)>0$. 
\end{theorem}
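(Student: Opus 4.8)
\Proofs
The plan is to compute both sides of the comparison map explicitly; the obstruction to its being an isomorphism will turn out to be a free $\Z_\ell$--module of rank $\operatorname{corank}_{\Z_\ell}\Sha(E)[\ell^\infty]+\max(0,1-\operatorname{rank}_\Q E)$. Write $R_S=\Z[S^{-1}]$. On the motivic side, for $S$ large $E[S^{-1}]$ is an abelian scheme over $R_S$ with the zero section $O$; using $\Z(1)\simeq\mathbb{G}_m[-1]$, the Gersten resolution of $\mathbb{G}_m$ (of length one on the regular integral scheme $E[S^{-1}]$, so $\widetilde{H}^{\ge 3}_{Mot}(E[S^{-1}],\Z(1))=0$), the equality $\mathcal{O}^\times(E[S^{-1}])=\mathcal{O}^\times(R_S)$, and $\operatorname{Pic}(E[S^{-1}])=\Z\oplus E(\Q)$ (split off the component group $\Z$ by $[O]$, use the principal polarization, and the valuative criterion; in particular this is independent of $S$), the Bockstein and Milnor exact sequences give $\widetilde{H}^{2}_{Mot}(E[S^{-1}],\mathbf{Z}_\ell(1))\cong\Z_\ell\oplus(E(\Q)\otimes\Z_\ell)$, again independent of $S$. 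On the étale side, the Kummer sequences together with the vanishing of the relevant $\varprojlim^1$--terms yield
$$0\to\Z_\ell\oplus(E(\Q)\otimes\Z_\ell)\to\widetilde{H}^{2}_{\text{\'et}}(E[S^{-1}],\mathbf{Z}_\ell(1))\to T_\ell\bigl(\widetilde{\operatorname{Br}}(E[S^{-1}])\bigr)\to 0,$$
where $\widetilde{\operatorname{Br}}(E[S^{-1}])=\ker\bigl(O^\ast\colon \operatorname{Br}(E[S^{-1}])\to \operatorname{Br}(R_S)\bigr)$, and the comparison map is the inclusion of the displayed subgroup. Since filtered colimits are exact, the map in the theorem is injective with cokernel $\varinjlim_S T_\ell(\widetilde{\operatorname{Br}}(E[S^{-1}]))$, and I must show this vanishes precisely when $\Sha(E)_{(\ell)}$ is finite and $\operatorname{rank}_\Q E>0$.

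To identify $\widetilde{\operatorname{Br}}(E[S^{-1}])$ I would run the Leray spectral sequence for $f\colon E[S^{-1}]\to\operatorname{Spec}R_S$ with $\mathbb{G}_m$--coefficients: $R^1f_\ast\mathbb{G}_m=\Z\times E^\vee$ and $R^2f_\ast\mathbb{G}_m=0$ (smooth proper relative curve with a section), and the zero section forces the bottom row to survive, killing in particular the differential $d_2\colon H^1_{\text{\'et}}(R_S,E^\vee)\to H^3_{\text{\'et}}(R_S,\mathbb{G}_m)$. Hence $\widetilde{\operatorname{Br}}(E[S^{-1}])\cong H^1_{\text{\'et}}(\operatorname{Spec}R_S,E^\vee)\cong H^1(R_S,E)$ via $E\cong E^\vee$. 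For $S$ containing $\ell$, $\infty$ and the bad primes, the valuative criterion and Lang's theorem (whence $H^1(\Z_v,E)=0$ at good $v$) identify $H^1(R_S,E)$ with $\{c\in H^1(\Q,E):c|_v=0\text{ for }v\notin S\}$, giving an exact sequence $0\to\Sha(E)\to H^1(R_S,E)\to\bigoplus_{v\in S}H^1(\Q_v,E)$. Applying the left-exact functor $T_\ell$, and using $H^1(\Q_v,E)\cong E(\Q_v)^\vee$ (local Tate duality) with $E(\Q_v)\cong\Z_p\times(\text{finite})$, so that $T_\ell H^1(\Q_v,E)=0$ for $v\nmid\ell$ while $T_\ell H^1(\Q_\ell,E)\cong\Z_\ell$, one gets $0\to T_\ell\Sha(E)\to T_\ell H^1(R_S,E)\to\Z_\ell$. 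Thus $T_\ell H^1(R_S,E)$ is free of rank $s+\varepsilon_S$ with $s=\operatorname{corank}_{\Z_\ell}\Sha(E)[\ell^\infty]$ and $\varepsilon_S\in\{0,1\}$ non-decreasing in $S$; since the transition maps are injective, $\varinjlim_S T_\ell H^1(R_S,E)=0$ if and only if $s=0$ and $\varepsilon_S=0$ for $S$ large.

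It remains to compute $\varepsilon_S$ by $\ell$--adic descent. Taking $\varprojlim$ of the Kummer sequences gives $0\to E(\Q)\otimes\Z_\ell\to H^1(G_{\Q,S},T_\ell E)\to T_\ell H^1(R_S,E)\to 0$ together with its local analogue at $\ell$, so after $\otimes\Q_\ell$ the integer $\varepsilon_S$ is the dimension of the image of $H^1(G_{\Q,S},V_\ell E)$ in $H^1(\Q_\ell,V_\ell E)/H^1_f(\Q_\ell,V_\ell E)$, where $H^1_f(\Q_\ell,V_\ell E)=E(\Q_\ell)\otimes\Q_\ell$ is one-dimensional. The global Euler characteristic formula (with $H^0(G_{\Q,S},V_\ell E)=0$, and complex conjugation acting on $V_\ell E$ with determinant $-1$) gives $\dim H^1(G_{\Q,S},V_\ell E)=1+\dim H^2(G_{\Q,S},V_\ell E)$, while Poitou--Tate duality, together with $H^0(\Q_v,V_\ell E)=0$ for all finite $v$ (so $\bigoplus_v H^1(\Q_v,V_\ell E)=H^1(\Q_\ell,V_\ell E)$, two-dimensional), gives $\dim H^2(G_{\Q,S},V_\ell E)=\dim\Sha^1(\Q,V_\ell E)$; combining, the localization $H^1(G_{\Q,S},V_\ell E)\to H^1(\Q_\ell,V_\ell E)$ has one-dimensional image. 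If $\operatorname{rank}_\Q E>0$, an infinite-order rational point is non-torsion in $E(\Q_\ell)$, so the image of $E(\Q)\otimes\Q_\ell$ already equals the one-dimensional $H^1_f(\Q_\ell,V_\ell E)$, which must then coincide with the one-dimensional image of all of $H^1(G_{\Q,S},V_\ell E)$; hence $\varepsilon_S=0$, $T_\ell H^1(R_S,E)$ has rank $s$, and the colimit vanishes iff $s=0$, i.e.\ iff $\Sha(E)_{(\ell)}$ is finite. If $\operatorname{rank}_\Q E=0$ then $E(\Q)\otimes\Q_\ell=0$: when $s=0$ the Selmer group $\operatorname{Sel}(V_\ell E)$ is zero while $H^1(G_{\Q,S},V_\ell E)\ne 0$, so a class localizes nontrivially into $H^1(\Q_\ell,V_\ell E)/H^1_f$ and $\varepsilon_S=1$; when $s>0$ the rank is already $\ge s>0$; either way the colimit is nonzero. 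This gives the stated equivalence. The main obstacle is this arithmetic bookkeeping: one must trace precisely how the single place $\ell$ --- the only place at which $T_\ell$ of a local $H^1$ is nonzero --- interacts with the ``$+1$'' in the global Euler characteristic, and see that this $+1$ is cancelled by the localization at $\ell$ exactly when $\operatorname{rank}_\Q E>0$, leaving $\operatorname{corank}_{\Z_\ell}\Sha(E)[\ell^\infty]$ as the only surviving obstruction.
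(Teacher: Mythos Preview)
Your argument is correct and lands on the same arithmetic core as the paper---the surjectivity of the Kummer map $E(\Q)\otimes\Z_\ell\to H^1_{\text{\'et}}(\Z[S^{-1}],T_\ell E)$, controlled by local and global duality---but the packaging is genuinely different and in several respects cleaner.

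The paper splits the equivalence into three separate implications: it proves ``(b) $\Rightarrow$ $\Sha_{(\ell)}$ finite'' by an explicit $\bmod\ \ell$ diagram chase through Selmer groups (diagram \rref{e2a}), proves ``(a) $\Rightarrow$ (e)'' by combining the local Euler characteristic (Lemma~\ref{l2}) with a Poitou--Tate computation of the image of the localization map (Lemma~\ref{lpt}), and proves ``(e) $\Rightarrow$ rank $>0$'' separately. It also runs the Hochschild--Serre spectral sequence for $\Zf_\ell(1)$-coefficients, which forces a special argument at $\ell=2$ (Lemma~\ref{l1}, handled by equivariant topology over $\R$). You instead run Leray for $\mathbb{G}_m$ and use the zero section to kill the relevant $d_2$, which works uniformly in $\ell$; you then identify the entire cokernel at once as $T_\ell H^1(R_S,E)$ and compute its $\Z_\ell$-rank as $s+\varepsilon_S$ via the global Euler characteristic formula and the identity $\dim H^2(G_{\Q,S},V_\ell E)=\dim\Sha^1(V_\ell E)$. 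This yields both directions simultaneously and gives the more precise statement that the obstruction is free of rank $\operatorname{corank}\Sha[\ell^\infty]+\max(0,1-\operatorname{rank}_\Q E)$, which the paper does not state explicitly. The price is that you invoke slightly heavier standard machinery (global Euler characteristic, Bloch--Kato $H^1_f$ language) where the paper stays closer to first principles; the benefit is a shorter, more conceptual proof with no case distinction on $\ell$.
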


\vspace{3mm}
\noindent
{\bf Remark:} Both direct limits in the statement of the Theorem are 
in fact eventually constant.

\vspace{3mm}
Here 
$$\Sha(E)=\bigcap_\nu Ker(H^1(\Q,E)\r H^1(\Q_\nu, E_\nu))$$
(where the intersection is taken over all completions of $\Q$)
is the {\em Tate-Shafarevich group}, the finiteness of which (even at one prime) is equivalent to 
the vanishing of the discrepancy between the rank of the group of rational points of $E$ and its computable 
estimate (see, for example, \cite{milne} for an introduction).

\vspace{3mm}
We see easily (as reviewed in the next section) that for $p=2$, $q=1$, \rref{evoev} is never an isomorphism 
for $X=S^0$. Therefore, it would never be an isomorphism for an elliptic curve if we took unreduced 
instead of reduced cohomology. It is worthwhile noting that philosophically speaking, by taking reduced cohomology, the
weight of the motive in question increases by $1$. If it increased by $2$, we would be back in the range of 
Voevodsky's isomorphism theorem. Consequently, we are investigating a cohomology group which is really ``just over the isomorphism line''.

\vspace{3mm}
Let
$T_\ell(E)$ be the $\ell$-adic Tate module of $E$, i.e. the inverse limit of its $\ell^n$-torsion .
At some point in the proof, Theorem \ref{t0} is rephrased as 
the following
statement in pure arithmetic:

\begin{theorem}\label{t00}
Let $E$ be an elliptic curve over $\mathbb{Q}$. Let $\ell$ be a prime. Then for a sufficiently large
finite set of primes $S$ in $\Z$, the Kummer map
$$E({\Z[S^{-1}]})\otimes \Z_\ell\r H^1_{\text{\em\'{e}t}}(\Z[S^{-1}],T_\ell(E))$$
is an isomorphism if and only if $\Sha(E)_{(\ell)}$ is finite and $rank_\Q(E)>0$. 
\end{theorem}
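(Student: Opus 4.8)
The plan is to analyze the Kummer sequence for $E$ over $\Z[S^{-1}]$ and identify precisely when the cokernel of the Kummer map vanishes, using the fact that the whole picture stabilizes once $S$ is large enough. First I would fix $\ell$ and take $S$ large enough so that $E$ has good reduction over $\Z[S^{-1}]$, so that $\ell \in S$, and so that certain finiteness statements below hold on the nose. Apply étale cohomology of $\Z[S^{-1}]$ to the short exact sequence of sheaves $0\to E[\ell^n]\to E\to E\to 0$ (good reduction makes $E$ a smooth group scheme and $E[\ell^n]$ finite flat, so this is exact in the relevant topology). Taking the inverse limit over $n$, one obtains a short exact sequence
\[
0\to E(\Z[S^{-1}])\otimes\Z_\ell \to H^1_{\text{\'et}}(\Z[S^{-1}],T_\ell(E)) \to T_\ell\big(H^1_{\text{\'et}}(\Z[S^{-1}],E)\big)\to 0,
\]
where the last term is the Tate module (inverse limit of $\ell^n$-torsion) of the cohomology group $H^1_{\text{\'et}}(\Z[S^{-1}],E)$; here one uses that $E(\Z[S^{-1}])$ is finitely generated (Mordell--Weil, since $E(\Z[S^{-1}])\subseteq E(\Q)$), so $\varprojlim^1$ of $E(\Z[S^{-1}])/\ell^n$ vanishes. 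Thus the Kummer map is an isomorphism if and only if $T_\ell\big(H^1_{\text{\'et}}(\Z[S^{-1}],E)\big)=0$, i.e. if and only if $H^1_{\text{\'et}}(\Z[S^{-1}],E)$ has finite $\ell$-primary part (no nontrivial $\ell$-divisible part), which forces in particular $\mathbf{rank}$ considerations to be fed in separately --- see below.

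Next I would relate $H^1_{\text{\'et}}(\Z[S^{-1}],E)$ to the Tate--Shafarevich group. The group $H^1_{\text{\'et}}(\Z[S^{-1}],E)$ sits inside $H^1(\Q,E)$ and, by the standard local-to-global comparison, its elements are exactly the classes unramified outside $S$; the subgroup of everywhere-locally-trivial classes is $\Sha(E)$. For the primes $\nu\in S$ (finitely many) and the archimedean place, the local cohomology groups $H^1(\Q_\nu,E)$ are all torsion and in fact have finite $\ell$-primary part (this is classical: $H^1(\Q_\nu,E)_{(\ell)}$ is finite for $\nu$ a finite place, Pontryagin dual to $E(\Q_\nu)\otimes\Q_\ell/\Z_\ell$ up to finite stuff by local Tate duality, and for $\nu$ good $\nmid\ell$ it is even zero on the $\ell$-part). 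Hence enlarging $S$ changes $H^1_{\text{\'et}}(\Z[S^{-1}],E)_{(\ell)}$ only by finite groups, so its $\ell$-divisible part is independent of $S$ (this is the source of the "eventually constant" remark), and it has trivial $\ell$-divisible part if and only if the $\ell$-divisible part of $\Sha(E)$ is trivial. Since $\Sha(E)_{(\ell)}$ is a cofinitely generated $\Z_\ell$-module (its $\ell^n$-torsion is finite, being sandwiched between finite Selmer groups), trivial $\ell$-divisible part is equivalent to $\Sha(E)_{(\ell)}$ being finite. This handles the "finiteness of $\Sha$" half of the equivalence.

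Finally I would account for the rank condition. Going back to the exact sequence, $E(\Z[S^{-1}])\otimes\Z_\ell\cong \Z_\ell^{\,\mathrm{rank}_\Q(E)}$ (the torsion of $E(\Q)$ being killed by tensoring with $\Z_\ell$ up to a finite group that does not affect the cokernel question) injects into $H^1_{\text{\'et}}(\Z[S^{-1}],T_\ell(E))$. One computes, via the global Euler characteristic formula (Tate) over $\Z[S^{-1}]$ together with the known local terms, the $\Z_\ell$-corank / rank of $H^1_{\text{\'et}}(\Z[S^{-1}],T_\ell(E))$; comparing with $\mathrm{rank}_\Q(E)$ one sees that the image of $E(\Z[S^{-1}])\otimes\Z_\ell$ is of finite index (equivalently the map is surjective after inverting $\ell$) precisely when $\mathrm{rank}_\Q(E)>0$ is needed to match the $H^0$/unit contributions --- more concretely, when $\mathrm{rank}_\Q(E)=0$ the left side is finite but the right side is infinite (it always surjects onto the nonzero $T_\ell\Sha$ or has positive corank coming from the trivial character / cyclotomic contribution), so the map cannot be onto; when $\mathrm{rank}_\Q(E)>0$ and $\Sha_{(\ell)}$ is finite the corank count matches and the map is onto, hence (being injective with torsion-free target after the reductions above) an isomorphism. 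Assembling: the Kummer map is an isomorphism iff both $\Sha(E)_{(\ell)}$ is finite and $\mathrm{rank}_\Q(E)>0$.

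The main obstacle I expect is the bookkeeping in the last paragraph: pinning down exactly why $\mathrm{rank}_\Q(E)=0$ obstructs surjectivity requires carefully tracking the "extra" $\Z_\ell$ in $H^1_{\text{\'et}}(\Z[S^{-1}],T_\ell(E))$ that does not come from $E(\Z[S^{-1}])$ --- this is a corank computation via the global and local Euler characteristic formulae and local Tate duality, and one must be careful that it is robust under enlarging $S$ (each new prime $\nu\in S$ contributes a local $H^1$ whose $\ell$-part, after inverse limit, adds a controlled amount that is exactly cancelled by the corresponding growth in $H^1_{\text{\'et}}(\Z[S^{-1}],E)$). Making this cancellation precise, so that the dichotomy is clean and $S$-independent, is the technical heart of the argument.
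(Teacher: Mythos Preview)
Your overall strategy—identifying the cokernel of the Kummer map with $T_\ell\bigl(H^1_{\text{\'et}}(\Z[S^{-1}],E)\bigr)$ via the limit of the Kummer sequences, and then comparing this group to $\Sha(E)$ through the localization maps—is sound and is a reasonable repackaging of what the paper does. But there is a concrete error in your second paragraph that, if uncorrected, makes the argument prove the wrong statement.

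You assert that $H^1(\Q_\nu,E)_{(\ell)}$ is finite for every finite place $\nu\in S$. This fails at $\nu=\ell$. By local Tate duality $H^1(\Q_\ell,E)$ is Pontryagin dual to $E(\Q_\ell)$, and $E(\Q_\ell)$ contains an open subgroup isomorphic to $\Z_\ell$ coming from the formal group; hence $H^1(\Q_\ell,E)[\ell^\infty]$ has a copy of $\Q_\ell/\Z_\ell$ and $T_\ell H^1(\Q_\ell,E)\cong\Z_\ell$. (Your own parenthetical, ``dual to $E(\Q_\nu)\otimes\Q_\ell/\Z_\ell$'', already exhibits this: that tensor product is infinite when $\nu=\ell$.) If your finiteness claim were true, your exact sequence would give $T_\ell\bigl(H^1_{\text{\'et}}(\Z[S^{-1}],E)\bigr)\cong T_\ell\Sha(E)$, and you would conclude that the Kummer map is an isomorphism iff $\Sha(E)_{(\ell)}$ is finite, with \emph{no} rank hypothesis—contradicting the theorem you are trying to prove.

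The rank condition enters precisely through this extra $\Z_\ell$ at $\ell$. What the localization sequence actually yields is
\[
0\;\longrightarrow\; T_\ell\Sha(E)\;\longrightarrow\; T_\ell\bigl(H^1_{\text{\'et}}(\Z[S^{-1}],E)\bigr)\;\longrightarrow\; T_\ell H^1(\Q_\ell,E)\cong\Z_\ell,
\]
and the substance of the theorem is that the right-hand map is nonzero exactly when $\mathrm{rank}_\Q(E)=0$. Equivalently, after inverting $\ell$, one must show that the image of $H^1_{\text{\'et}}(\Z[S^{-1}],V_\ell(E))$ in $H^1(\Q_\ell,V_\ell(E))$ is always a line, and that this line coincides with the image of the local Kummer map $E(\Q_\ell)\widehat{\otimes}\,\Q_\ell\hookrightarrow H^1(\Q_\ell,V_\ell(E))$ if and only if $\mathrm{rank}_\Q(E)>0$. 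The paper establishes the first point by a Poitou--Tate computation exploiting the self-duality $V_\ell(E)^*(1)\cong V_\ell(E)$ (its Lemma~\ref{lpt}); the second then follows because a rational point of infinite order forces the global line into the local Kummer line, and conversely if the two lines agree one recovers positive rank from the fact that the global $H^1$ is nonzero. Your third paragraph gestures at a global Euler-characteristic count, but that only gives the $\Q_\ell$-dimension of $H^1_{\text{\'et}}(\Z[S^{-1}],V_\ell(E))$, not where its image sits inside the $2$-dimensional local $H^1$ at $\ell$; for that you genuinely need the Poitou--Tate duality input, which is the step your proposal is missing.
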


A reader interested only in arithmetic and not motivic cohomology can consider this statement only, and
skip directly to Section \ref{shard}. To the author, the motivic statement was the original motivation, which led
to the observation. The author thanks J.Nekov\'a\v{r}, C.Skinner and C.Weibel for discussions and for 
pointing out mistakes in eariler statements of this simple but tricky result, and for helping to correct them.

\vspace{3mm}
The present note is organized as follows: We review some notation and fix some definitions in the next section,
and we give a more definitive statement of Theorem \ref{t0}. In Section \ref{s4}, we prove the easier of the 
two main implications of the theorem. In Section \ref{shard}, we prove the harder implication, and also
Theorem \ref{t00}. Finally, in Section \ref{s5}, we give an example where 
the statement of the harder implication can be proved by more elementary means.

\vspace{3mm}

\noindent

\vspace{5mm}
\section{Basic Definitions and the Main Theorem}
\label{s3}

Let $E$ be a smooth projective variety over a Noetherian scheme $Z$. We will mostly be interested in 
the case where
\beg{eass1}{\parbox{3.5in}{$Z=Spec(\Q)$ or $Z=Spec(\Z[S^{-1}])$ where $S$ is some finite set of primes.}}
Let us begin with reviewing some notation. The Kummer short exact sequence of \'{e}tale sheaves 
\beg{egm}{\diagram 0\rto & \Z/\ell^k(1)\rto& \mathbb{G}_m\rto^{\ell^k}& \mathbb{G}_m\rto & 0
\enddiagram}
gives rise to a cofibration sequence in the derived category of \'{e}tale sheaves 
$$\diagram
\mathbb{G}_m\rto^{\ell^k} & \mathbb{G}_m\rto^(.35){\phi_k} &\Z/\ell^k(1)[1].
\enddiagram$$
We then have the canonical homomorphism
$$\phi_{k*}:H^{1}_{\text{\em\'{e}t}}(E,\mathbb{G}_m)\r H^{2}_{\text{\em\'{e}t}}(E,\Z/\ell^k(1)).$$
In this paper, we will make use of the derived categories of motives and the derived category of \'{e}tale 
motives $\mathbf{DM}^-_{Nis}$, $\mathbf{DM}^-_{\text{\em\'{e}t}}$ (\cite{vss,mvv}). Constant sheaves,
$\mathbb{G}_m$, $\mathbb{Z}/\ell^k(1)$ are examples of \'{e}tale sheaves with transfers, thereby 
defining objects of $\mathbf{DM}^-_{Nis}$, $\mathbf{DM}^-_{\text{\em\'{e}t}}$. We will denote the 
corresponding objects of those categories by the same symbols. Smooth schemes over $Z$
have well defined cohomology with coefficients in an object of
$\mathbf{DM}^-_{Nis}$ or $\mathbf{DM}^-_{\text{\em\'{e}t}}$. If the object of 
$\mathbf{DM}^-_{Nis}$ or $\mathbf{DM}^-_{\text{\em\'{e}t}}$
comes from a homotopy invariant Nisnevich
resp. \'{e}tale sheaf with transfers, the cohomology with coefficients in the motive is the same 
as the corresponding Nisnevich (resp. \'{e}tale) cohomology. Moreover, Nisnevich
cohomology of smooth schemes with coefficients in homotopy invariant
sheaves is the same as Zariski cohomology (\cite{mvv}, Proposition 13.9). We will refer to the latter
simply as {\em motivic cohomology}. This justifies our identification of symbols, since 
we are solely interested in cohomology. We will decorate motivic resp.
\'{e}tale cohomology as $H_{Mot}$, $H_{\text{\em{\'{e}t}}}$, thus eliminating the need to distinguish
notations on the level of coefficients.

\vspace{3mm}
Next, in $\mathbf{DM}^-_{Nis}$, $\mathbf{DM}^-_{\text{\em\'{e}t}}$, we shall write
\beg{elet}{\begin{array}{c}\displaystyle\Zf_\ell=\operatornamewithlimits{holim}_\leftarrow \Z/\ell^k\\
\displaystyle\Zf_\ell(1)=\operatornamewithlimits{holim}_\leftarrow \Z/\ell^k(1).
\end{array}}
It is important to note that these are {\em not} the same objects as $\Z_\ell$, 
$\Z_\ell(1)$, which mean the Nisnevich or \'{e}tale constant sheaf and its tensor with
$\Z(1)$ respectively (or the associated Nisnevich or \'{e}tale motive). 
For example, for $Z=Spec(k)$ where $k$ is a field, by Theorem 4.1 of \cite{mvv},
$$H^1(Spec(k),\Z_\ell(1))=k^\times\otimes_\Z\Z_\ell,$$
which is in general not equal to 
$$H^1(Spec(k),\Zf_\ell(1))=\lim_\leftarrow (k^\times/(k^\times)^{\ell^m}).$$
We have the usual $\lim{}^1$ exact sequence
\beg{elim1}{0\r\lim{}^1 H^{i-1}_{\text{\em\'{e}t}}(E,\Z/{\ell^k}(1))
\r H^{i}_{\text{\em\'{e}t}}(E,\Zf_\ell(1))\r \lim_\leftarrow H^{i}_{\text{\em\'{e}t}}(E,\Z/\ell^k(1))\r 0.
}
There is also a similar short exact sequence for the motivic groups. \'{E}tale cohomology groups
with coefficients in $\mathbf{Z}_\ell(n)$ were first introduced by U.Jannsen \cite{jan}.

We have a canonical diagram
\beg{ed1}{\diagram
H^{1}_{\text{\em\'{e}t}}(E,\mathbb{G}_m)
\drto\rrto^{\Phi} && H^{2}_{\text{\em\'{e}t}}(E,\Zf_\ell(1))\\
&H^{1}_{\text{\em\'{e}t}}(E,\mathbb{G}_m)\otimes \Z_\ell.\urto &
\enddiagram
}

\vspace{3mm}
\begin{lemma}\label{ltwist1}
We have the following isomorphisms both in $\mathbf{DM}^-_{Nis}$ and $\mathbf{DM}^-_{\text{\em\'{e}t}}$:
\beg{ed2}{\diagram
\Z(1)\otimes \Zf_\ell \rto^(.6)\simeq &\Zf_\ell(1).
\enddiagram
}
\end{lemma}

\begin{proof}
In the category of derived motives, $\Z(1)=\mathbb{G}_m[-1]$
is an invertible and hence strongly dualizable object with dual $\Z(-1)$, so we have
$$
\begin{array}{l}\displaystyle
\Z(1)\otimes \Zf_\ell=Hom(\Z(-1),\Zf_\ell)=\\[2ex]
\displaystyle
\operatornamewithlimits{holim}_\leftarrow 
Hom(\Z(-1),\Z/\ell^m)=\operatornamewithlimits{holim}_\leftarrow \Z/\ell^m(1)=\Zf_\ell(1),
\end{array}
$$ 
as claimed.
\end{proof}

But also a smooth projective variety is strongly dualizable in the stable motivic homotopy category,
and therefore its cohomology is equal to the homology of its dual. It follows that in 
the following comparison diagram, the top row (with notation
analogous to the \'{e}tale case) is an isomorphism in the case when $E$ is an elliptic curve, and we have
\rref{eass1}:
\beg{ed3}{\diagram
H^{1}_{Mot}(E,\mathbb{G}_m)\otimes\Z_\ell\rto^{\cong}_(.55){\Phi_{Mot}}
\dto_{\cong}^{\rho\otimes \Z_\ell}&H^{2}_{Mot}(E,\Zf_\ell(1))
\dto^\rho\\
H^{1}_{\text{\em\'{e}t}}(E,\mathbb{G}_m)\otimes\Z_\ell \rto^\Phi & H^{2}_{\text{\em\'{e}t}}(E,
\Zf_\ell(1)).
\enddiagram
}
(To see that $\Phi_{Mot}$ is an isomorphism in \rref{ed3}, note that the group of rational
points $E(\Q)$ is a finitely generated
abelian group. We have
$$H^2_{Mot}(E,\Z/\ell^m(1))=E(\Q)/\ell^m$$
by the Kummer exact sequence, while 
$H^1_{Mot}(E,\Z/\ell^m(1))$ is the $\ell^m$-torsion subgroup of $E(\Q)$, which is finite
and hence the $\lim{}^1$ term vanishes in the motivic analogue of \rref{elim1}.)

On the other hand, the realization map
$$\rho:H^{1}_{Mot}(E,\mathbb{G}_m)\r H^{1}_{\text{\em\'{e}t}}(E,\mathbb{G}_m)$$
is well known to be an isomorphism (a version of Hilbert 90 theorem, see e.g. \cite{etale}). Therefore,
the left column of diagram \rref{ed3} is an isomorphism.

\vspace{3mm}
We must discuss another point. For a scheme $X$, one defines the {\em Brauer group}
$$Br(X)=H^2_{\text{\em\'{e}t}}(X,\mathbb{G}_m).$$
Define also
$$T_\ell Br(X)=\lim_\leftarrow {}_{\ell^k}Br(X)$$
where ${}_{n}Br(X)$ is the $n$-torsion in $Br(X)$ (i.e. the subgroup of elements $x$ where
$nx=0$). One writes $Br(R)$ instead of $Br(Spec(R))$.

\vspace{3mm}
As stated, there is no chance that the map $\rho$ (or
$\Phi$) of diagram \rref{ed3} would be an isomorphism. Let us consider the case of
$Spec(R)$ where $R$ is a number field or $\Z[S^{-1}]$. Then 
$$H^{1}_{Mot}(Spec(R),\G_m)=0,$$
and hence 
\beg{emot0}{H^{2}_{Mot}(Spec(R),\Zf_\ell(1))=0,}
whereas by \rref{elim1}, we have a short exact sequence
$$\begin{array}{c}0\r \lim{}^1 H^{1}_{\text{\em\'{e}t}}(Spec(R),\Z/\ell^k(1))\r 
H^{2}_{\text{\em\'{e}t}}(Spec(R),\Zf_\ell(1))\\
\r \displaystyle\lim_\leftarrow H^{2}_{\text{\em\'{e}t}}(
Spec(R),\Z/\ell^k(1))\r 0,\end{array}$$
or, using \rref{egm},
$$0\r\lim{}^1 R^\times/R^{\times\ell^k}\r H^{2}_{\text{\em\'{e}t}}(Spec(R),\Zf_\ell(1))\r \lim_\leftarrow
{}_{\ell^R}Br(R)\r 0.$$
The first term is clearly $0$ (since the maps are onto), so we get
\beg{emot1}{H^{2}_{\text{\em\'{e}t}}(R,\Zf_\ell(1))\cong \lim_\leftarrow {}_{\ell^k}Br(R).
}
The right hand side of \rref{emot1} is nonzero
by class field theory. However, if $E$ has a point
over $k$, the map $\rho$ from \rref{emot0} to \rref{emot1} is a retract of
the map $\rho$ in \rref{ed3}, so the map $\rho$ cannot be an isomorphism. 
As customary, we will denote by $\widetilde{H}$ the kernel of either row of the
diagram \rref{ed3} to $Z$ induced by a $Z$-point in $E$, and call this summand the
corresponding {\em reduced cohomology group}.

\vspace{3mm}

\vspace{3mm}
Let $\ell=2,3,5,\dots$ be a  prime and let $E$ be an elliptic curve 
defined over $\Q$. Denote by $S$
the (finite)
set of all primes in $\Z$ dividing the conductor of $E$.
Note that by the criterion of N\'{e}ron-Ogg-Shafarevich, $T_\ell(E)$ is unramified at all primes $p\notin S$, 
and the elliptic curve $E$ has a smooth projective model over $\Z[S^{-1}]$, which we will denote by
$E(\Z[S^{-1}])$.

\vspace{3mm}
\begin{theorem}
\label{t1}

The following are equivalent:

(a) $\Sha(E/\Q)\otimes\Z_{(\ell)}$ is
finite and $rank_\Q(E)>0$.

(b)
The realization map of diagram \rref{ed3}
\beg{ereg}{\rho:\widetilde{H}^{2}_{Mot}(E(\Z[S^{-1}]), \Zf_\ell(1))\r 
\widetilde{H}^{2}_{\text{\em\'{e}t}}(E(\Z[S^{-1}]),\Zf_\ell(1))}
is an isomorphism.

(c) The map $\rho$ of \rref{ereg}  is onto.

(d) The map
\beg{ereg1}{\Phi:\widetilde{H}^{1}_{\text{\em\'{e}t}}(E(\Z[S^{-1}]),\mathbb{G}_m)\otimes \Z_\ell
\r \widetilde{H}^{2}_{\text{\em\'{e}t}}(E(\Z[S^{-1}]),\Zf_\ell(1))
}
is an isomorphism.

(e) The map $\Phi$ of \rref{ereg1} is onto. 

(f) The map
\beg{ebrauertate}{T_\ell Br(E(\Z[S^{-1}]))\r T_\ell Br(\Z[S^{-1}])
}
induced by the inclusion of $0\in E$ is an isomorphism.

\end{theorem}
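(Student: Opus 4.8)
The plan is to prove the theorem by establishing a cycle of implications together with a few ``sideways'' equivalences, using the comparison diagram \rref{ed3} as the organizing device. Since the left column of \rref{ed3} is an isomorphism (Hilbert 90) and the top row $\Phi_{Mot}$ is an isomorphism (as verified in the excerpt via the Kummer sequence and finiteness of torsion in $E(\Q)$), the map $\rho$ of \rref{ereg} and the map $\Phi$ of \rref{ereg1} literally fit into a commuting square in which two of the four arrows are isomorphisms; hence $\rho$ is an isomorphism (resp. onto) if and only if $\Phi$ is. This immediately gives the equivalences (b)$\Leftrightarrow$(d) and (c)$\Leftrightarrow$(e), and of course (b)$\Rightarrow$(c), (d)$\Rightarrow$(e) are trivial. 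So the real content is to connect the surjectivity statements (c)/(e) to the arithmetic condition (a), and to identify the cokernel of $\Phi$ with the cokernel of the Brauer-group map in (f).

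Next I would analyze $\Phi$ directly. Writing $U=E(\Z[S^{-1}])$, the Kummer sequence \rref{egm} and the $\lim^1$ sequence \rref{elim1} give a short exact sequence describing $H^{2}_{\text{\'et}}(U,\Zf_\ell(1))$ with subobject $\lim^1 H^1_{\text{\'et}}(U,\Z/\ell^k(1))$ and quotient $\lim_\leftarrow {}_{\ell^k}Br(U) = T_\ell Br(U)$. The map $\Phi$ is, up to the identification $H^1_{\text{\'et}}(U,\G_m)\otimes\Z_\ell = Pic(U)\otimes\Z_\ell$, the $\ell$-adic completion of the map sending a line bundle to its image under $\phi_{k*}$; its image lands in the $\lim^1$-free part coming from $H^1$, so the cokernel of $\Phi$ surjects onto $T_\ell Br(U)$ modulo the image. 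The point is that for a scheme like $U$, the $\lim^1$ term $\lim^1 \mathcal{O}^\times(U)/(\mathcal{O}^\times(U))^{\ell^k}$ vanishes (finitely generated units, the quotients stabilize), and a short diagram chase identifies $coker(\Phi)$ with $coker\big(T_\ell Br(U)\to$ nothing$\big)$ — more precisely, after splitting off the section coming from $0\in E(U)$, the reduced version gives $\widetilde{coker}(\Phi)\cong coker\big(Pic(U)\otimes\Z_\ell \to H^2\big)$ and the latter is exactly the kernel/cokernel data of \rref{ebrauertate}. This yields (e)$\Leftrightarrow$(f); I would carry this out carefully since it is where the bookkeeping is densest.

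Finally, to connect with (a), I would pass to the arithmetic reformulation (Theorem \ref{t00}): using that $T_\ell(E)$ is unramified outside $S$, the Hochschild--Serre / Leray spectral sequence for $E(\Z[S^{-1}])\to\Spec\Z[S^{-1}]$ (or the standard computation of $\ell$-adic cohomology of an elliptic curve over a base) expresses $\widetilde{H}^2_{\text{\'et}}(E(\Z[S^{-1}]),\Zf_\ell(1))$ in terms of $H^1_{\text{\'et}}(\Z[S^{-1}],T_\ell(E))$, while the source $\widetilde{H}^1_{\text{\'et}}(E(\Z[S^{-1}]),\G_m)\otimes\Z_\ell = Pic^0(U)\otimes\Z_\ell$ is $E(\Z[S^{-1}])\otimes\Z_\ell$. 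Under these identifications $\Phi$ becomes the $\ell$-adic Kummer map $E(\Z[S^{-1}])\otimes\Z_\ell\to H^1_{\text{\'et}}(\Z[S^{-1}],T_\ell(E))$, so (e) becomes exactly the statement of Theorem \ref{t00}, which in turn — via the Poitou--Tate exact sequence and the descent description of $\Sha$ — is equivalent to $\Sha(E)_{(\ell)}$ being finite together with $rank_\Q(E)>0$ (the rank positivity is forced because the target is infinite, coming from $H^1(\Z[S^{-1}],T_\ell(E))$ having positive $\Z_\ell$-rank once $S$ is large, so the source $E(\Z[S^{-1}])\otimes\Z_\ell$ must be infinite). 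I expect the main obstacle to be exactly this last step: pinning down, uniformly in the size of $S$, the cokernel of the $\ell$-adic Kummer map in terms of $\varinjlim_S$ and relating it precisely to $\Sha_{(\ell)}$ versus the ``extra'' classes contributed by the large set $S$, so that the clean iff of (a) emerges; this is deferred to Section \ref{shard} in the paper and is where the genuinely arithmetic work lies. The remaining implications (a)$\Rightarrow$(b) and the trivial ones then close the cycle.
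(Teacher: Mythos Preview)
Your architecture matches the paper's closely: use diagram \rref{ed3} for (b)$\Leftrightarrow$(d) and (c)$\Leftrightarrow$(e); identify the cokernel of $\Phi$ with the relative $T_\ell Br$ for (f); reduce (e) via Hochschild--Serre to the Kummer map of Theorem \ref{t00}; and connect the latter to $\Sha$ and rank via Poitou--Tate. Two genuine gaps, though.

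First, you never get from surjectivity back to bijectivity: you record (b)$\Rightarrow$(c) and (d)$\Rightarrow$(e) as trivial, but (c)$\Rightarrow$(b) (equivalently (e)$\Rightarrow$(d)) requires that $\Phi$ be \emph{injective}. The paper handles this at the very start of Section \ref{s4}: $\Phi\bmod\ell^k$ is an isomorphism for every $k$, and the source $Pic(E)\otimes\Z_\ell$ is a finitely generated $\Z_\ell$-module, hence has no nonzero infinitely $\ell$-divisible elements, so $\Phi$ is injective. Without this step your closing sentence ``(a)$\Rightarrow$(b) and the trivial ones close the cycle'' does not actually close anything, since from (a)$\Rightarrow$(e) you only reach (c).

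Second, ``via the Poitou--Tate exact sequence and the descent description of $\Sha$'' is too coarse for (a)$\Leftrightarrow$(e), and the paper treats the two directions quite differently. The direction (b)$\Rightarrow\{\Sha_{(\ell)}$ finite$\}$ (Section \ref{s4}) is a Selmer-tower/Bockstein argument built around the large diagram \rref{e2a}, and for $\ell=2$ needs a separate lemma (Lemma \ref{l1}) killing a $d_2$ in the Hochschild--Serre spectral sequence. The converse (a)$\Rightarrow$(e) (Section \ref{shard}) rests on two specific inputs you do not name: the local Euler-characteristic computation $H^1(\Q_p,V_\ell(E))\cong 0$ for $p\neq\ell$ and $\Q_\ell^2$ for $p=\ell$ (Lemma \ref{l2}), and the fact that the image of $H^1_{\text{\'et}}(\Z[S^{-1}],V_\ell(E))\to H^1(\Q_\ell,V_\ell(E))$ is exactly one-dimensional over $\Q_\ell$ (Lemma \ref{lpt}, proved from Poitou--Tate). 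This last lemma is also precisely what justifies your claim that the target of the Kummer map has positive $\Z_\ell$-rank and hence forces $rank_\Q(E)>0$; the intuition you state is right, but it does not follow from generalities about ``$S$ large'' and needs this computation.
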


\vspace{3mm}

\vspace{5mm}

\section{Proof of the Main Theorem - The Easy Implication}
\label{s4}

Consider diagram \rref{ed3} and the fact that the maps $\Phi$, $\rho$
when reduced $\mod \ell^k$ become (by definition) isomorphisms.
Since the source of $\Phi$ is a finitely generated $\Z_\ell$-module,
it has no infinite $\ell$-divisibility, which implies that $\Phi$ (and hence
$\rho$) is injective. Therefore, we know that (b), (c), (d) and (e) of the statement
are equivalent.

\vspace{3mm}
Let us prove that (b) implies the first statement of (a), i.e. that $\Sha(E/\Q)\otimes\Z_{(\ell)}$ is finite.
We follow \cite{milne}, Chapter IV.2. Consider the diagram
\beg{edselmer}{\diagram
E(\Q)/\ell E(\Q)\rto^{i_1} & S^{(\ell)}(E/\Q) \rto^{j_1} & H^1(\Q,{}_\ell E)\\
E(\Q)/\ell^2 E(\Q)\uto^{\pi_1}\rto^{i_2} &
S^{(\ell^2)}(E/\Q)\uto^{\alpha_1}\rto^{j_2} &
H^1(\Q,{}_{\ell^2}E)\uto^{\gamma_1}\\
E(\Q)/\ell^3 E(\Q)\uto^{\pi_2}\rto^{i_3} &
S^{(\ell^3)}(E/\Q)\uto^{\alpha_2}\rto^{j_3} &
H^1(\Q,{}_{\ell^3}E)\uto^{\gamma_2}\\
\vdots \uto^{\pi_3} & \vdots\uto^{\alpha_3} &
\vdots. \uto^{\gamma_3}
\enddiagram}
Here, as usual, ${}_nE$ denotes the $n$-torsion in $E$, and $S^{(n)}$ denotes the Selmer group,
i.e. the kernel of the map
$$H^1(\Q,{}_nE)\r\prod_p H^1(\Q_p,E).$$
The maps $i_n$, $j_n$ are inclusions, the $\pi_n$ are projections
(hence onto), and the other vertical maps are induced by projections.
Since $\Sha(E/\Q)_{\ell^n}$ is a quotient of the finite group
$S^{(\ell^n)}(E/\Q)$, it is finite, so finiteness of $\Sha(E/\Q)\otimes \Z_{(\ell)}$
is equivalent to the absence of infinitely $\ell$-divisible non-zero elements in
$\Sha(E/\Q)$. This, in turn, is equivalent to asserting that
\beg{e1}{\text{$i_1:E(\Q)/\ell E(\Q)\r \bigcap_{n} Im (\alpha_1\alpha_2\dots \alpha_n)$
is onto.}
}
Clearly, \rref{e1} follows from 
\beg{e2}{\text{$j_1i_1:E(\Q)/\ell E(\Q)\r \bigcap_{n} Im (\gamma_1\gamma_2\dots \gamma_n)$
is onto.}
}
We shall prove \rref{e2}. 
We have
$$H^2_{Mot}(E,\Z(1))\cong \Z \oplus E(\Q)$$
where the first summand corresponds to the degree. More precisely, there is a short exact
sequence of the form
\beg{edeg}{\diagram
0\rto&H^2_{Mot}(E,\Z(1))_0
\rto &
H^2_{Mot}(E,\Z(1))
\rto^(.7){deg} & \Z\rto & 0,
\enddiagram
}
and there is a canonical isomorphism
$$H^2_{Mot}(E,\Z(1))_0\cong E(\Q).$$
We shall also be interested in the $\ell$-adic version of \rref{edeg}:
$$
\diagram
0\rto&H^2_{Mot}(E,\Zf_\ell(1))_0
\rto &
H^2_{Mot}(E,\Zf_\ell(1))
\rto^(.7){deg} & \Z_\ell\rto & 0.
\enddiagram
$$
Now consider the diagram
{\protect\beg{e2a}{\resizebox{\textwidth}{!}{\protect{\diagram
H^2(E(\Z[S^{-1}]),\Z/\ell\Z(1))\rto^{\overline{r}}&
\widetilde{H}^{2}_{\text{{\em\'{e}t}}}(E(\Z[S^{-1}]),\Z/\ell\Z(1))_0\rto^{\overline{u}}& H^1_{\text{{\em\'{e}t}}}
(\Z[S^{-1}],{}_\ell E)\\
E(\Q)/\ell E(\Q)\uto_{\subseteq}\rto_{r^\prime} &
\widetilde{H}^{2}_{\text{{\em\'{e}t}}}(E(\Z[S^{-1}],\Zf_\ell(1))_0/(\ell)\uto_q\rto_{u^\prime} & 
H^1_{\text{{\em\'{e}t}}}(\Z[S^{-1}],T_\ell(E))/(\ell)\uto^{\subseteq}_{s}\\
H^{2}_{Mot}(E(\Q),\Zf_\ell(1))_0\uto|>>\tip^{\pi^\prime}\rto^{r}
&\widetilde{H}^{2}_{\text{{\em\'{e}t}}}(E(\Z[S^{-1}]),\Zf_\ell(1))_0\uto^{\gamma^\prime}_{\cong}\rto^u &
H^1_{\text{{\em\'{e}t}}}(\Z[S^{-1}],T_\ell(E)).\uto|>>\tip_\gamma
\enddiagram}}
}}
To explain this, first note that we have
$$H^{2}_{Mot}(E,\Zf_\ell(1))_0/(\ell)=E(\Q)/\ell E(\Q).$$
Next, the \'{e}tale group with subscript $0$ is defined in analogy with the
corresponding motivic group, i.e. as the kernel of the degree map. 
The maps $\pi^\prime$, $\gamma^\prime$, $\gamma$ are reductions
$\mod\ell$, so they are onto. The map $r$ is \'{e}tale realization, and is
an isomorphism by our assumption (b). 
The maps $s,q$ are inclusions coming from the Bockstein long exact sequence associated
with 
$$\diagram
0 \rto & T_\ell(E)\rto^\ell & T_\ell(E)\rto & {}_\ell E\rto & 0
\enddiagram$$
where $T_\ell(E)$ is the Tate module, i.e. 
$$\lim_\leftarrow {}_{\ell^k}E,$$
which is non-canonically isomorphic to $\Z_\ell^2$.
Now the map $u$ comes from the Hochschild-Serre spectral sequence 
\beg{e3}{
E_2=H^p_{\text{{\em\'{e}t}}}(\Z[S^{-1}],H^{q}_{\text{{\em\'{e}t}}}(E(K),\Zf_\ell(1)))
\Rightarrow H^{p+q}_{\text{{\em\'{e}t}}}(E(\Z[S^{-1}]),\Zf_\ell(1))
}
where $K$ is the maximal extension of $\Q$ over which all the primes outside of $S$ are unramified.
We have
$$T_\ell(E)=H^{1}_{\text{{\em\'{e}t}}}(E(K),\Zf_\ell(1)),$$
so the $p=q=1$ term is the target of $u$. Note (using purity) that the
$p=0,q=2$ term is
$$H^{0}_{\text{{\em\'{e}t}}}(\Z[S^{-1}],\Zf_\ell)=\Z_\ell,$$
and the edge map in $p+q=2$ is the degree map.
Note also that the $p=2$, $q=0$ term is
$$H^{2}_{\text{{\em\'{e}t}}}(\Z[S^{-1}],\Zf_\ell(1));$$
the canonical map of the right hand side to $ H^{2}_{\text{\em\'{e}t}}(E(\Z[S^{-1}]),\Z_\ell(1)) $ is
the edge map. Therefore, since $E$ contains a point over $\Z[S^{-1}]$, 
the projection given by the spectral sequence
$${H}^{2}_{\text{{\em\'{e}t}}}(E(\Z[S^{-1}]),\Zf_\ell(1))_0\r
H^1_{\text{{\em\'{e}t}}}(\Z[S^{-1}],T_\ell(E))$$
factors through an injection
$$u:\widetilde{H}^{2}_{\text{{\em\'{e}t}}}(E(\Z[S^{-1}]),\Zf_\ell(1))_0\r H^1_{\text{{\em\'{e}t}}}
(\Z[S^{-1}],T_\ell(E)).$$
The map $\overline{u}$ is defined as the corresponding map for
the analogous spectral sequence $\overline{E}^{r}_{pq}$
with coefficients reduced $\mod \ell$. 

At this point, let us first assume that $\ell\neq 2$.
Then $u$ is onto since the only possible differential of \rref{e3}
originating at $p=q=1$ has target
$$H^{3}_{\text{{\em\'{e}t}}}(\Z[S^{-1}],H^{0}_{\text{{\em\'{e}t}}}(E(K),
\Zf_\ell(1)))=0.$$
Next, observe that by the Bockstein spectral sequence,
\beg{e4}{Im(s\gamma)=\bigcap_n \gamma_1\dots\gamma_n,}
while
\beg{e5}{su^\prime r^\prime=j_1i_1.
}
In effect, to prove \rref{e5}, let us spell out the definition of $j_1i_1$:
Take $x\in E(\Q)$, and set
\beg{e6}{y=\sqrt[\ell]{x}\in E.
}
Then define a $1$-cocycle on $Gal(\Q)$ by setting
\beg{e7}{g\mapsto \frac{g(y)}{y}.
}
Now the analogue $\overline{E}^{r}_{p,q}$ of \rref{e3} with coefficients
reduced $\mod \ell$ has a motivic analogue ${}_{Mot}\overline{E}^{r}_{p,q}$
(although we do not know whether it converges). Nevertheless, we have a realization
map of exact couples, and hence spectral sequences
\beg{e8}{{}_{Mot}\overline{E}^{r}_{p,q}\r \overline{E}^{r}_{p,q}.
}
On $r=2$, $p=q=1$, and $r=2$, $p=0$, $q=2$, the map \rref{e8} is an isomorphism.
Now on the level of $\mod \ell$ motivic cohomology, the definition corresponding
to \rref{e6} and \rref{e7} is equal to $\overline{u}\overline{r}$ by the definition
of the exact couple which produces ${}_{Mot}\overline{E}^{r}_{p,q}$, which proves
\rref{e5}.

Now since $u,\gamma$ are onto and $r$ is an isomorphism, $u^\prime r^\prime$ is
onto, and so is $\gamma$, so
$$Im(s\gamma)=Im(su^\prime s^\prime)=Im(s).$$
Therefore, \rref{e4} and \rref{e5} imply \rref{e2}.

Now let us treat the case $\ell=2$. We see that all that remains to show is that $u$ is
onto, which follows from the following Lemma.

\vspace{3mm}

\begin{lemma}
\label{l1}
In the Hochschild-Serre spectral sequence \rref{e3}, we have
\beg{e+}{\diagram
H^1_{\text{{\em\'{e}t}}}(\Z[S^{-1}],
H^{1}_{\text{\em\'{e}t}}(E(K),\Zf_2(1)))=H^1_{\text{{\em\'{e}t}}}(\Z[S^{-1}],T_2(E))\dto^{d_2=0}\\
H^3_{\text{{\em\'{e}t}}}(\Z[S^{-1}],H^{0}_{\text{\em\'{e}t}}(E(K),\Zf_2(1)))=H^3_{\text{{\em\'{e}t}}}
(\Z[S^{-1}],\Zf_2(1)).
\enddiagram
}
\end{lemma}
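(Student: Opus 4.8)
The plan is to show that the potential differential $d_2$ out of the $p=q=1$ spot vanishes by identifying its source and target with objects carrying a Galois action and exploiting a weight (or parity) discrepancy, exactly as in the $\ell \neq 2$ case one could instead cite the vanishing of $H^0_{\text{\'et}}(E(K),\Zf_\ell(1))$ when twisted appropriately. Concretely, first I would rewrite the target group. Since $H^0_{\text{\'et}}(E(K),\Zf_2(1)) = \Zf_2(1)$ as a sheaf on $\Z[S^{-1}]$, the target is $H^3_{\text{\'et}}(\Z[S^{-1}],\Zf_2(1))$, and via the $\lim^1$ sequence \rref{elim1} this is built out of $\lim_\leftarrow H^3_{\text{\'et}}(\Z[S^{-1}],\Z/2^k(1))$ and $\lim^1 H^2_{\text{\'et}}(\Z[S^{-1}],\Z/2^k(1))$. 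By Artin–Verdier / class field theory duality for the arithmetic scheme $\Z[S^{-1}]$ (or directly: $\Z[S^{-1}]$ has \'etale cohomological dimension $2$ away from the real place, and the only contribution in degree $3$ comes from the real place of $\Q$), the group $H^3_{\text{\'et}}(\Z[S^{-1}],\Z/2^k(1))$ is at worst $2$-torsion of order bounded independent of $k$, coming from $H^3(\Q_\infty, \mu_{2^k}) = \Z/2$. So the target of $d_2$ is a finite $2$-group supported ``at the real place.''

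Second, I would argue that the source $H^1_{\text{\'et}}(\Z[S^{-1}],T_2(E))$ maps trivially into this real-place contribution. Here I would use that the differential $d_2$ is a map of $\Z_2$-modules that is compatible with the localization maps to each completion $\Q_\nu$; in particular it factors through the corresponding differential in the Hochschild–Serre spectral sequence for $E$ over $\R = \Q_\infty$. Over $\R$, the spectral sequence computes $H^*_{\text{\'et}}(E_\R,\Zf_2(1))$, and the relevant $d_2 : H^1(\Gal(\C/\R), T_2(E)) \to H^3(\Gal(\C/\R), \Zf_2(1))$ can be analyzed by hand because $\Gal(\C/\R) = \Z/2$ and all the Tate cohomology is $2$-periodic: one checks that the class being hit would have to come from $E(\R)$-data, and the edge map / degree-map structure already noted (the $p=0,q=2$ term is $\Z_2$ with the degree edge map, and $E$ has a rational point) forces the differential landing in $p=3$ to be zero, since a nonzero value would contradict the surjectivity of the degree map combined with $E(\R) \neq \emptyset$. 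Alternatively, and perhaps more cleanly, I would invoke that $H^1_{\text{\'et}}(\Z[S^{-1}],T_2(E))$ is finitely generated over $\Z_2$ while the target is killed by $2$, and track the image of a $\Z_2$-generator: since $d_2$ is $\Z_2$-linear and the source has no $\Z_2$-summand mapping onto $\Z/2$ compatibly with the Bockstein structure relating $T_2(E)$ to ${}_{2}E$, the map vanishes.

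The main obstacle I anticipate is the case $\ell = 2$ precisely because the real place genuinely contributes to $H^3_{\text{\'et}}(\Z[S^{-1}],\Zf_2(1))$ (unlike odd $\ell$, where that group simply vanishes and the Lemma is trivial). So the crux is not a formal degree count but a genuine local computation at $\Q_\infty$: one must show the global class in $H^1(\Z[S^{-1}],T_2(E))$ either maps to zero in $H^1(\R, T_2(E))$ for the reason that the differential there already vanishes, or that even if it is nonzero locally, the composite with the real-place $d_2$ is zero. I would handle this by the explicit description of $\Gal(\C/\R)$-cohomology: writing $T_2(E) \cong \Z_2^2$ with the complex conjugation acting through a matrix conjugate to $\mathrm{diag}(1,-1)$, one computes $H^1(\Z/2, T_2(E))$ and the cup-product / Bockstein map into $H^3(\Z/2,\Z_2(1)) = H^1(\Z/2,\Z/2) = \Z/2$, and verifies it is zero — this is the one hands-on verification the proof cannot avoid, and I expect the author to carry it out (or to bypass it by a duality argument identifying the $d_2$ with the dual of a map that is visibly zero by the existence of the rational point $0 \in E$). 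Once $d_2 = 0$ is established, $u$ is onto, completing the $\ell = 2$ case and hence the easy implication.
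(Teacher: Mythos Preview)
Your reduction to the real place is exactly the paper's strategy: the right-hand column of the restriction diagram to $\R$ is an isomorphism (the paper cites Serre, Theorem~B, p.~108), so it suffices to show the corresponding $d_2$ vanishes in the spectral sequence for $E$ over $\R$. Up to this point your plan and the paper coincide.

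Where the proposal falls short is the local step. You correctly flag that ``this is the one hands-on verification the proof cannot avoid,'' but none of the three arguments you offer actually closes it. The $\Z_2$-linearity argument is wrong as stated: a $\Z_2$-linear map from a finitely generated $\Z_2$-module to $\Z/2$ can perfectly well be nonzero (reduction mod $2$ is such a map), and invoking ``Bockstein compatibility'' does not pin anything down. The degree-map argument conflates the two edge maps: surjectivity of the $q=2$ edge (degree) says nothing about differentials landing in the $q=0$ row. The explicit-matrix approach is on the right track, but you assume complex conjugation acts as $\mathrm{diag}(1,-1)$ on $T_2(E)$; over $\Z_2$ this need not hold when $E(\R)$ is connected (rhombic lattice case, where the matrix is $\left(\begin{smallmatrix}1&1\\0&-1\end{smallmatrix}\right)$, not $\Z_2$-conjugate to the diagonal one), so at minimum a case split is required and you would still have to identify the $d_2$ in each case, which you do not do.

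The paper's completion is quite different from what you anticipate: rather than computing group cohomology of $\Z/2$ with coefficients in $T_2(E)$ and chasing a differential, the author observes that the spectral sequence over $\R$ is the Borel cohomology spectral sequence for the $\Z/2$-action by complex conjugation on $E(\C)$, identifies $E(\C)$ $\Z/2$-equivariantly with $S^1\times S^\alpha$ (via the Tate uniformization $\C^\times/q^{\Z}$), and then notes that the suspension spectrum splits as $S^0\vee S^1\vee S^\alpha\vee S^{1+\alpha}$. A wedge of representation spheres has a collapsing Borel spectral sequence, so $d_2=0$. This is a genuinely topological argument that sidesteps any cocycle computation.
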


\begin{proof}
We have a restriction comparison diagram
$$\diagram
H^1_{\text{{\em\'{e}t}}}(\Z[S^{-1}],H^{1}_{\text{\em\'{e}t}}(E(K),\Zf_2(1)))\dto\rto^{d_2}&
H^3_{\text{{\em\'{e}t}}}(\Z[S^{-1}],H^{0}_{\text{\em\'{e}t}}(E(K),\Zf_2(1)))\dto^\cong\\
H^1(\R,H^{1}_{\text{\em\'{e}t}}(\overline{E},\Zf_2(1)))\rto^{d_2}
& H^3(\R,H^{0}_{\text{\em\'{e}t}}(\overline{E},\Zf_2(1)))
\enddiagram$$
where the right hand column is an isomorphism by Theorem B, p. 108 of \cite{serre}.
Thus, we may replace $\Q$ by $\R$ in \rref{e+}.

Then, however, we are dealing with a spectral sequence isomorphic to the
Borel cohomology spectral sequence for the $\Z/2\Z$-action by complex conjugation
on a (complexified) real elliptic curve, i.e. $E_\C=\C^\times/q^\Z$, $q\in\R$,
$0<q<1$. We see then that topologically $\Z/2\Z$-equivariantly, we have
\beg{e++}{E\cong S^1\times S^\alpha
}
where $\alpha$ is the sign representation and $S^V$ is the one point compactification of
a representation $V$. But stably (i.e. after taking suspension spectra), \rref{e++} splits as
$$S^0\vee S^1\vee S^\alpha\vee S^{1+\alpha},$$
and hence the Borel cohomology spectral sequence collapses.
\end{proof}

\section{The Hard Implication}\label{shard}

\vspace{3mm}
Thus, we have shown that (b) implies the first statement of (a) To complete the proof, we will show that
(a) implies (e), and that (e) together with the finiteness of $\Sha(E/\Q)\otimes\Z_{(\ell)}$ implies $rank_\Q(E)>0$.

We shall make use of the following

\begin{lemma}
\label{l2}
Let $E$ be an elliptic curve defined over $\Q$. Let $\ell, p$ be primes. Then 
\beg{evan}{H^{1}(\Q_p,T_\ell(E))\otimes_{\Z_\ell}\Q_\ell\cong
\left\{
\begin{array}{ll}
0 & \text{if $p\neq \ell$}\\
\Q_p\oplus \Q_p &\text{if $p=\ell$}
\end{array}
\right.
}
\end{lemma}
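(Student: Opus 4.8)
The plan is to compute $H^1(\Q_p, T_\ell(E))$ by a local Euler characteristic argument, since the only subtlety in \rref{evan} is that rationally the $H^0$ and $H^2$ terms contribute nothing, so the whole dimension is forced by the Euler characteristic formula. First I would pass from the Tate module $T_\ell(E)$ to the $\Q_\ell$-vector space $V_\ell(E) = T_\ell(E) \otimes_{\Z_\ell} \Q_\ell$, a $2$-dimensional Galois representation of $G_{\Q_p}$; since $\Q_\ell$ is flat over $\Z_\ell$ we have $H^i(\Q_p, T_\ell(E)) \otimes \Q_\ell \cong H^i(\Q_p, V_\ell(E))$ (continuous cohomology commutes with this filtered colimit, or one invokes that the cohomology groups are finitely generated $\Z_\ell$-modules). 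So it suffices to show $\dim_{\Q_\ell} H^1(\Q_p, V_\ell(E))$ is $0$ for $p \neq \ell$ and $2$ for $p = \ell$.

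Next I would handle the $H^0$ and $H^2$ terms. For $H^0$: $H^0(\Q_p, V_\ell(E)) = V_\ell(E)^{G_{\Q_p}}$, and this vanishes because $E$ has no $\Q_p$-rational $\ell$-power torsion forming a line stable under all of $G_{\Q_p}$ on which Galois acts trivially — more precisely, the $\ell$-adic Tate module of an elliptic curve has no nonzero trivial Galois subrepresentation over a local field, as the fixed points of $V_\ell$ inject into (the $\Q_\ell$-vector space generated by) the torsion points defined over $\Q_p$, which is finite. For $H^2$: by local Tate duality, $H^2(\Q_p, V_\ell(E))$ is dual to $H^0(\Q_p, V_\ell(E)^* (1))$. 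Using the Weil pairing, $V_\ell(E)^*(1) \cong V_\ell(E)$ (the Tate module is self-dual up to the cyclotomic twist), so $H^2(\Q_p, V_\ell(E)) \cong H^0(\Q_p, V_\ell(E))^* = 0$ as well.

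Then I would apply the local Euler characteristic formula (Tate): for a finite-dimensional $\Q_\ell$-representation $V$ of $G_{\Q_p}$,
$$\dim H^0(\Q_p, V) - \dim H^1(\Q_p, V) + \dim H^2(\Q_p, V) = \begin{cases} 0 & p \neq \ell \\ -[\Q_p : \Q_\ell]\cdot \dim V & p = \ell. \end{cases}$$
With $\dim V = 2$ and $H^0 = H^2 = 0$, this gives $\dim H^1(\Q_p, V_\ell(E)) = 0$ for $p \neq \ell$ and $\dim H^1(\Q_\ell, V_\ell(E)) = 2$ for $p = \ell$, which upon tensoring back is exactly \rref{evan}. (The identification of the $2$-dimensional $\Q_\ell$-space with $\Q_p \oplus \Q_p$ in the statement is merely a choice of basis, so nothing more is needed there.)

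The main obstacle, such as it is, is the vanishing of $H^0$ (equivalently $H^2$): one must be sure that $V_\ell(E)$ really has no trivial subrepresentation over $\Q_p$. This is standard — it follows from the fact that $E(\Q_p)[\ell^\infty]$ is finite (indeed $E(\Q_p)$ is a compact $\ell$-adic Lie group with finite torsion, or one uses the formal group to see the torsion is finite) — but it is the one input that genuinely uses that $E$ is an elliptic curve and not a more general motive, so I would state it carefully. Everything else is formal: flatness to reduce to $\Q_\ell$-coefficients, Weil-pairing self-duality plus local Tate duality to kill $H^2$, and the Euler characteristic formula to pin down $H^1$.
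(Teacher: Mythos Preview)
Your proof is correct and follows essentially the same route as the paper: pass to $V_\ell(E)$, show $H^0=H^2=0$ via the Weil-pairing self-duality $V_\ell(E)^*(1)\cong V_\ell(E)$ together with local Tate duality, and then read off $\dim H^1$ from Tate's local Euler characteristic formula. The one noteworthy difference is your justification of $V_\ell(E)^{G_{\Q_p}}=0$: you deduce it from the elementary fact that $E(\Q_p)$ has finite torsion (so $T_\ell(E)^{G_{\Q_p}}$ is finite), whereas the paper argues via semi-stable reduction, invoking $p$-adic Hodge theory and the Weil conjectures in the good-reduction case and the Tate uniformisation in the multiplicative case. Your argument is more economical here and entirely adequate for the lemma as stated; the paper's detour through reduction types has the mild advantage of making clear that the vanishing holds uniformly over all finite extensions of $\Q_p$, but that is not needed for \rref{evan}.
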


\begin{proof}
Let 
$$V_\ell(E)=T_\ell(E)\otimes_{\Z_\ell}\Q_\ell.$$
We can use the Euler characteristic formula for local Galois cohomology \cite{serre} 5.7, Theorem 5 together
with the fact that 
\beg{egal1}{V_\ell(E)^{Gal(\Q_\ell)}=0,} 
and that 
\beg{egal2}{Hom_{\Q_\ell}(V_\ell(E),\Q_\ell)(1)\cong V_\ell(E)}
as Galois representations.

(To prove \rref{egal1}, note that the same claim holds for every extension of $\Q_p$, and we will prove it in this case. By semi-stable reduction we may reduce to the cases, by taking a finite extension of the base field, if it is necessary, when $E$ has either good or split multiplicative reduction. In the first case by p-adic Hodge theory the claim fails then the rigid cohomology of degree 1 of the reduction of $E \mod p$ has a trivial factor as an F-isocrystal. This is not possible by the Weil conjectures \cite{delignew}. In the other case one can use the Tate uniformisation (cf. \cite{silver2} Section V.2) of $E$ to describe the Tate module of $E$ as a nontrivial extension of $\Q_p(1)$ by $\Q_p$, and hence there are no invariants in this case either.)

The Euler characteristic formula is stated for finite modules in \cite{serre},  but one can look at 
$T_\ell(E)/(\ell^m)$ and pass to the limit to get a statement about ranks. For any continuous 
$\Z_\ell[Gal(\Q_p)]$-module $T$ satisfying 
\rref{egal1} and \rref{egal2}, $V=T\otimes_{\Z_p}(\Q_p)$, 
the rank of $H^1(\Q_p,V)$ over $\Q_\ell$ is $0$ for $\ell\neq p$ and is equal 
to $rank_{\Q_p}(V)$ for $\ell=p$ and $0$ otherwise.
\end{proof}

\vspace{3mm}
 Now the inverse limit of the short exact sequences
\beg{eredd}{\diagram 0\rto & E_{\ell^k} \rto &  E \rto^{\ell^k} & E\rto &0
\enddiagram}
has the form
\beg{esolenoid}{0\r T_\ell(E)\r \lim_\leftarrow E \r E\r 0.
}
(Again, there is no $\lim^1$ since the maps are onto.)
Observe now that the first map factors as follows:
\beg{esolfac}{\diagram
T_\ell(E)\rrto\drto && \displaystyle\lim_\leftarrow E\\
& T_\ell(E)\otimes_{\Z_\ell}\Q_\ell\urdotted|>\tip
\enddiagram
}
Therefore, Lemma \ref{l2} has the following 

\begin{corollary}
\label{c1}
The connecting map of \rref{esolenoid}
\beg{econnect}{\delta=\delta_p:E(\Q_p)
\r H^{1}(\Q_p,T_\ell(E))
}
 is onto for $p\neq \ell$ and for $p=\ell$, if $xp\in Im(\delta)$ with $x\in H^1(\Q_p,T_p(E))$,
then $x\in Im(\delta)$.
\end{corollary}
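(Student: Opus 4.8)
The plan is to read the image of $\delta_p$ directly off the long exact $Gal(\Q_p)$-cohomology sequence of \rref{esolenoid} and then to exploit the divisibility of $\lim_\leftarrow E$ that already underlies \rref{esolfac}. Writing $\iota:T_\ell(E)\hookrightarrow\lim_\leftarrow E$ for the first map of \rref{esolenoid}, exactness identifies
$$Im(\delta_p)=Ker\big(\iota_*:H^1(\Q_p,T_\ell(E))\r H^1(\Q_p,\lim_\leftarrow E)\big).$$
The key structural remark — and the reason \rref{esolfac} holds at all — is that multiplication by $\ell$ is an isomorphism on $\lim_\leftarrow E$ (on the inverse system $\cdots\xrightarrow{\ell}E\xrightarrow{\ell}E$ it is simply the shift), hence it is an isomorphism on $H^1(\Q_p,\lim_\leftarrow E)$ as well; in particular that group has no $\ell$-torsion.

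For $p=\ell$ this already suffices (here $T_p(E)=T_\ell(E)$): if $x\in H^1(\Q_p,T_\ell(E))$ with $xp=\ell x\in Im(\delta_p)=Ker\,\iota_*$, then $\ell\,\iota_*(x)=\iota_*(\ell x)=0$, and $\ell$-torsion-freeness of the target forces $\iota_*(x)=0$, i.e. $x\in Ker\,\iota_*=Im(\delta_p)$, which is precisely the assertion. For $p\neq\ell$ I would feed in Lemma~\ref{l2}: it gives $H^1(\Q_p,T_\ell(E))\otimes_{\Z_\ell}\Q_\ell=0$, and since $H^1(\Q_p,T_\ell(E))$ is a finitely generated $\Z_\ell$-module (its reduction mod $\ell$ injects into the finite group $H^1(\Q_p,{}_\ell E)$, so topological Nakayama applies), it is in fact finite. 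A homomorphism from a finite $\ell$-group to an $\ell$-torsion-free group vanishes, so $\iota_*=0$, $Ker\,\iota_*$ is everything, and $\delta_p$ is onto.

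I do not expect a genuine obstacle: the argument is essentially formal once \rref{esolenoid}, \rref{esolfac} and Lemma~\ref{l2} are available. The only two points that deserve a word of care are (i) the $\ell$-torsion-freeness of $H^1(\Q_p,\lim_\leftarrow E)$, which I would deduce purely formally from the invertibility of $\ell$ on $\lim_\leftarrow E$ (so that one never has to commit to a topology on the unwieldy, non-finitely-generated module $\lim_\leftarrow E$, nor to a particular cohomology convention for it), and (ii) the upgrade, for $p\neq\ell$, of the rank-zero conclusion of Lemma~\ref{l2} to honest finiteness of $H^1(\Q_p,T_\ell(E))$ rather than mere torsion-ness — this is the one place where finite generation over $\Z_\ell$ is used. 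Everything else is bookkeeping in the long exact sequence.
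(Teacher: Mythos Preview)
Your argument is correct and, for the case $p=\ell$, is exactly the paper's proof (stated directly rather than by contrapositive): the paper also passes to $H^1(\Q_p,\lim_\leftarrow E)$ and uses that multiplication by $\ell$ is injective there.

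For $p\neq\ell$ the paper does not spell anything out, but the intended one-liner is to use the factorization \rref{esolfac} itself rather than $\ell$-torsion-freeness of the target: since $\iota$ factors through $V_\ell(E)$, the induced map $\iota_*$ factors through $H^1(\Q_p,V_\ell(E))$, which is zero by Lemma~\ref{l2}, so $\iota_*=0$ and $\delta_p$ is onto. Your route via finiteness of $H^1(\Q_p,T_\ell(E))$ is also correct, but the finite-generation/Nakayama step is avoidable---you already cited \rref{esolfac}, so you may as well use it directly and skip the extra paragraph.
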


\begin{proof}
For $p=\ell$, if $x\notin Im(\delta)$, then $x$ maps to a non-zero element of $H^1(\Q_p,\displaystyle \lim_\leftarrow E)$,
so $xp$ maps to a non-zero element of $H^1(\Q_p,\displaystyle \lim_\leftarrow E)$, contradicting $xp\in Im(\delta)$.
\end{proof}

We also note that $Coker(\delta_\ell)$ is equal to $T_\ell H^1(\Q_\ell,E)$, which is torsion-free (in fact,
isomorphic to $\Z_\ell$).

\vspace{3mm}
Now by our earlier discussion of the Hochschild-Serre
spectral sequence \rref{e3}, (e) is equivalent to the statement that
\beg{econnectq}{\delta_\Q:E(\Q)\otimes \Z_p\r H^1_{\text{{\em\'{e}t}}}(\Z[S^{-1}],T_p(E))
}
(which is injective) is onto, and to the statement of
Theorem \ref{t00}. 

\begin{lemma}
\label{lpt}
The inclusion of a $\ell$-decomposition subgroup in $Gal(\overline{\Q}/\Q)$ induces a homomorphism
\beg{egal3}{H^1_{\text{{\em\'{e}t}}}(\Z[S^{-1}],V_\ell(E))\r H^1(\Q_\ell,V_\ell(E))}
whose image is isomorphic to $\Q_\ell$.
\end{lemma}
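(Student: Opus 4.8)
\medskip\noindent\textbf{Proof proposal.} The plan is to determine the dimension over $\Q_\ell$ of the image by combining Tate's global Euler characteristic formula with the Poitou--Tate nine-term exact sequence. Write $V=V_\ell(E)$, and let $G_S$ be the Galois group of the maximal extension of $\Q$ unramified outside $S$ (enlarging $S$ harmlessly so that $\ell\in S$). Since $T_\ell(E)$ is unramified outside $S$, we have $H^1_{\text{\'{e}t}}(\Z[S^{-1}],V)=H^1(G_S,V)$, and the map in question is the localization (restriction) map $\mathrm{loc}_\ell\colon H^1(G_S,V)\to H^1(\Q_\ell,V)$ attached to a chosen place above $\ell$; the goal is to show $\dim_{\Q_\ell}\mathrm{im}(\mathrm{loc}_\ell)=1$.

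First I would collect the local ingredients. At $v=\ell$, Lemma \ref{l2} gives $\dim_{\Q_\ell}H^1(\Q_\ell,V)=2$, while $H^0(\Q_\ell,V)=0$ by \rref{egal1}, so by local Tate duality together with the self-duality \rref{egal2} also $H^2(\Q_\ell,V)\cong H^0(\Q_\ell,V^\vee(1))^\vee=0$. For a finite place $v\in S$ with $v\ne\ell$ I would first check $H^0(\Q_v,V)=0$: a nonzero $G_{\Q_v}$-fixed vector in $V$, rescaled to a primitive vector of $T_\ell(E)$, spans a $G_{\Q_v}$-fixed $\Z_\ell$-direct summand $L\subseteq T_\ell(E)$, and then the subgroups $L/\ell^n L\subseteq E[\ell^n]$ would force $\#E(\Q_v)_{\mathrm{tors}}\ge\ell^n$ for every $n$, contradicting the finiteness of the torsion of $E(\Q_v)$. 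As above this gives $H^2(\Q_v,V)=0$, and the local Euler characteristic formula (the easy case $v\nmid\ell$) then gives $H^1(\Q_v,V)=0$ as well. At the archimedean place, $H^i(\R,V)=0$ for $i\ge 1$ because $V$ is a $\Q_\ell$-vector space and $2=\#Gal(\C/\R)$ is invertible in $\Q_\ell$. Hence, among the local terms attached to $S$ and $\infty$, the only nonzero $H^1$ is $H^1(\Q_\ell,V)$ (of dimension $2$), and all the local $H^2$ vanish.

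For the global count, recall that the Weil pairing gives the self-duality $V^\vee(1)\cong V$, that complex conjugation acts on $V$ with determinant $-1$ (it inverts $\mu_{\ell^\infty}$) and hence with a $1$-dimensional $+1$-eigenspace, and that $H^0(G_S,V)=V^{G_\Q}=0$ because $E(\Q)$ is finitely generated. Setting $h^i=\dim_{\Q_\ell}H^i(G_S,V)$ and applying Tate's global Euler characteristic formula to $T_\ell(E)/\ell^n$, then passing to the limit (the groups involved are finite, so there is no $\lim{}^1$ obstruction), one obtains
$$h^0-h^1+h^2=\dim_{\Q_\ell}V^{c=1}-\dim_{\Q_\ell}V=1-2=-1,$$
hence $h^1=h^2+1$.

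Finally I would feed these into the Poitou--Tate sequence for $V$ (again deduced from the finite-level sequences in the limit). By exactness, $\mathrm{coker}(\mathrm{loc}_\ell)$ is identified with the kernel of the map $H^1(G_S,V^\vee(1))^\vee\to H^2(G_S,V)$ of that sequence; since all local $H^2$ vanish this map is surjective, so using $V^\vee(1)\cong V$ its kernel has dimension $h^1-h^2=1$. As $\mathrm{loc}_\ell$ takes values in the $2$-dimensional space $H^1(\Q_\ell,V)$, it follows that $\dim_{\Q_\ell}\mathrm{im}(\mathrm{loc}_\ell)=2-1=1$, i.e. $\mathrm{im}(\mathrm{loc}_\ell)\cong\Q_\ell$. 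The part needing most care, which I would write out in full, is the passage from the finite-coefficient theorems of Tate and Poitou--Tate to the $\Q_\ell$-coefficient statements used here (the behaviour of the inverse limits of the finite groups $H^i(G_S,T_\ell(E)/\ell^n)$, and additivity of $\Z_\ell$-ranks along the relevant exact sequences), together with the vanishing $H^0(\Q_v,V)=0$ at the primes of bad reduction.
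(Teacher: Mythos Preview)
Your argument is correct and follows essentially the same route as the paper: both reduce to the Poitou--Tate nine-term sequence for $T_\ell(E)/\ell^m$, invoke the self-duality $V^\vee(1)\cong V$ from the Weil pairing, and pass to the inverse limit to reach the $\Q_\ell$-coefficient statement; you simply fill in the local vanishing computations that the paper leaves implicit. One minor remark: your separate appeal to Tate's global Euler characteristic formula is redundant, since once you know all local $H^i$ vanish except $H^1(\Q_\ell,V)\cong\Q_\ell^2$, the alternating sum of dimensions in the Poitou--Tate sequence already gives $h^1-h^2=1$ directly.
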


\begin{proof}
Again, it is true more generally for any continuous $\Z_\ell[Gal(K/\Q)]$-module $T$,
$V=T\otimes_{\Z_p}\Q_p$,  satisfying \rref{egal1} and \rref{egal2}
that for $p=\ell$, \rref{egal3} is an inclusion whose image has $\Q_p$-rank equal to $rank_{\Q_p}(V)/2$. The Poitou-Tate exact sequence \cite{tate,mpt} is usually stated for a finite continuous $Gal(K/\Q)$-module $M$:
\beg{ept}{\resizebox{\textwidth}{!}{\diagram
0\rto & H^0_{\text{{\em\'{e}t}}}(\Z[S^{-1}],M)\rto &\prod_{p\in S} 
H^0(\Q_p,M)\rto & H^2_{\text{{\em\'{e}t}}}(\Z[S^{-1}],M^\prime)^*\dto&\\
&H^1_{\text{{\em\'{e}t}}}(\Z[S^{-1}],M^\prime)^*\dto & \prod^{\prime}_{p\in S} 
H^1(\Q_p,M)\lto & H^1_{\text{{\em\'{e}t}}}(\Z[S^{-1}],M)\lto&\\
&H^2_{\text{{\em\'{e}t}}}(\Z[S^{-1}],M)\rto &\bigoplus_{p\in S}
 H^2(\Q_p,M)\rto & H^0_{\text{{\em\'{e}t}}}(\Z[S^{-1}],M^\prime)^*\rto & 0
\enddiagram}
}
where $M^\prime$ denotes the Pontrjagin dual and $\prod^\prime$ the restricted product (of course, 
in the present case, they are the same thing, since $S$ is finite). Our statement can be proved
by replacing $T$ with $T/(\ell^m)$ and passing to the limit, also considering the fact that the Tate module is self-dual.
\end{proof}

Since  the groups 
$$H^0_{\text{{\em\'{e}t}}}(\Z[S^{-1}],E_{\ell^k})$$ 
are finite, 
there is no $\lim{}^1$ term, 
and we have
$$H^1_{\text{{\em\'{e}t}}}(\Z[S^{-1}],T_\ell(E))=\lim_\leftarrow 
H^1_{\text{{\em\'{e}t}}}(\Z[S^{-1}],E_{\ell^k}).$$
Therefore, for \rref{econnectq}, it suffices to prove that the connecting map of \rref{eredd}
\beg{eredonto}{E(\Q)\r Im(H^1_{\text{{\em\'{e}t}}}(\Z[S^{-1}],T_\ell(E))\r 
H^1_{\text{{\em\'{e}t}}}(\Z[S^{-1}],E_{\ell^k}))
}
is onto. Take an element $x\in H^1_{\text{{\em\'{e}t}}}(\Z[S^{-1}],T_\ell(E))$. By Corollary \ref{c1}, the image
of $x$ in $H^1(\Q_\ell,T_\ell(E))$ is in the image of the connecting map \rref{econnect}
if and only if this is true rationally, i.e. if
\beg{egal+}{
\parbox{3.5in}{The image of
every element $x\in H^1_{\text{{\em\'{e}t}}}(\Z[S^{-1}],V_\ell(E))$ 
in $H^1(\Q_\ell,V_\ell(E))$ is in the image of the connecting map
$$\delta:E(\Q_\ell)\widehat{\otimes}\Q_\ell\r H^1(\Q_\ell,V_\ell(E)).$$}
}
If $rank_\Q(E)>0$, then certainly \rref{egal+} is true: By Lemma \ref{lpt}, the image of 
$H^1_{\text{{\em\'{e}t}}}(\Z[S^{-1}],V_\ell(E))$ in
$H^1(\Q_\ell,V_\ell(E))$ is
a $1$-dimensional (over $\Q_\ell$) subspace of $H^1(\Q_\ell,V_\ell(E))$ and if 
$$rank_\Q(E)>0,$$ 
then a 
$1$-dimensional $\Q_p$-subspace of the target of $\delta$ in \rref{egal+} comes from the connecting map
\rref{econnectq}.

If \rref{egal+} holds, noting that $p=\ell$,
then the reduction $x_m$ of our element $x$ $\mod \ell^m$ is in the image of the connecting map.
This means that 
\beg{eselmerx}{x_m\in S^{(\ell^m)}(E/\Q).}
Therefore, assuming $\Sha$ has no $\ell$-divisibility beyond $\ell^s$, 
we see from \rref{eselmerx} that 
$x_k=x_{k+s+1}\mod\ell^k$ is in the image of the connecting map \rref{econnectq},
as claimed. 

(Note that, in general, an element $x\in H^1_{\text{{\em\'{e}t}}}(\Z[S^{-1}],T_\ell(E))$ lies
in the $\ell$-adic Selmer group $S_\ell=\displaystyle\lim_{\displaystyle \leftarrow} S^{(\ell^m)}$ if and only if
$x_\ell\notin Im(\delta_\ell)$ (provided $\ell\neq 2$). Furthermore, $S_\ell/Im(\delta_\Q)=T_\ell\Sha(E/\Q)$,
which means that $S_\ell=Im(\delta_\Q)$ if and only if $\Sha(E/\Q)\otimes \Z_{(\ell)}$ is finite.)

This completes the proof that (a) implies (e).

\vspace{3mm}

To prove that (e) implies that $rank_\Q(E)>0$, 
note that we can assume $\Sha(E)_{(\ell)}<\infty$, since we already proved that (e) implies it.
Therefore, we have an isomorphism
\beg{eshaq}{\diagram
E(\Q)\otimes \Q_\ell\rto^(.4)\cong &H^1_{\text{{\em\'{e}t}}}(\Z[S^{-1}],V_\ell E).
\enddiagram
}
On the other hand, the right hand side of \rref{eshaq} has non-zero $\Q_\ell$-rank, since
by Lemma \ref{lpt}, its image in \rref{egal3} has $\Q_\ell$-rank $1$.

\vspace{3mm}
The following proof that (f) is equivalent to (b) was pointed out to me by C.Weibel: We have
\beg{eh21}{H^2_{\text{{\em\'{e}t}}}(E(\Z[S^{-1}]),\Z/\ell^k(1))=Pic(E)/\ell^k,}
\beg{ehet21}{H^2_{\text{\'e}t}(E(\Z[S^{-1}]),\Z/\ell^k(1))= Pic(E)/\ell^k\oplus {}_{\ell^k}Br(E(\Z[S^{-1}])}
(by the long exact sequence in cohomology associated with the short exact sequence of sheaves \rref{egm}).
Also, we have
$$\begin{array}{l}H^{1}_{\text{{\em\'{e}t}}}(E(\Z[S^{-1}]),\Z/\ell^k(1))=\\
\Q^*/\Q^{*\ell^k}\oplus {}_{\ell^k}Pic(E)\cong H^{1}_{\text{{\em\'{e}t}}}(E(\Z[S^{-1}]),\Z/\ell^k(1)).
\end{array}$$
Now consider the diagram
$$\resizebox{5in}{!}{\diagram
0\rto &\displaystyle\lim_\leftarrow{}^{1}H^1(E(\Z[S^{-1}],\Z/\ell^k(1))\dto^\cong \rto &
H^2(E(\Z[S^{-1}]),\mathbf{Z}_\ell(1))\dto\rto &
\displaystyle\lim_\leftarrow H^2_{\text{{\em\'{e}t}}}(E(\Z[S^{-1}]),\Z/\ell^k(1))\rto \dto & 0\\
0\rto &
\displaystyle\lim_\leftarrow{}^{1}H^1_{\text{\'{e}t}}(E(\Z[S^{-1}]),\Z/\ell^k(1))\rto &
H^2_{\text{\'{e}t}}(E(\Z[S^{-1}]),\mathbf{Z}_\ell(1))\rto &
\displaystyle\lim_\leftarrow H^2_{\text{\'{e}t}}(E(\Z[S^{-1}]),\Z/\ell^k(1)) \rto& 0.
\enddiagram}
$$
Using \rref{eh21} and \rref{ehet21}, we obtain 
$$\begin{array}{l}Coker(H^2(E(\Z[S^{-1}]),\mathbf{Z}_\ell(1))\r H^2_{\text{\'{e}t}}(E(\Z[S^{-1})
,\mathbf{Z}_\ell(1)))\\=T_\ell Br(E(\Z[S^{-1}])).\end{array}$$
Of course, one can make the same calculation with $E$ replaced by $Spec(\Q)$, and naturality then
gives that (f) is equivalent to (b).
\qed

\vspace{3mm}
Finally, let us see why these arguments do not work with 
$$H^1_{\text{{\em\'{e}t}}}(\Z[S^{-1}],T_p(E))$$
replaced by the global Galois cohomology group $H^1(\Q,T_p(E))$. First, we note that
\rref{ept} in fact holds for a finite module $M$ without assuming that $S$ is finite. Thus, we may
replace $S$ by the set of all primes in $\Q$, which amounts to replacing $\Z[S^{-1}]$ by $\Q$.
Going even further, we see that the relevant $\lim_\leftarrow{}^{1}$-terms in this case are also
$0$, so even \rref{ept} remains valid with $\Z[S^{-1}]$ replaced by
$\Q$. 

The problem, however, is that the cohomology group $H^1(\Q,T_p(E))$ is huge: 
for $S$ equal to the set of all primes in $\Q$, $M=T_\ell(E)/(\ell^k)$, we can find 
infinitely many primes $p$ for which $E(\Q_p)$ has $\ell^k$-torsion: By Chebotarev density theorem,
it suffices to choose primes $p$ at which $T_\ell(E)$ is unramified, and for which the Frobenius acts
trivially on the field obtained by attaching the $\ell^k$-$E$-torsion to $\Q$. In the inverse limit 
of the middle term \rref{ept} in the case when $S$ contains all primes in $\Q$, then, the inverse limit
of these infinite products of $\ell^k$-torsion modules over diminishing sets
of primes will create an uncountable non-torsion submodule, which must come from an uncountable torsion
submodule of $H^1(\Q,T_\ell(E))$ by the exactness of the limit of \rref{ept}. In view of the
above discussion, this also proves
Theorem \ref{tm1}. We remark that this argument is similar to the method of Kolyvagin \cite{kol1}.

\vspace{3mm}

\section{An Example}\label{s5}

In this Section, we give an example where we can show for an elliptic curve that \rref{egal+}
is false directly. In the example, the conclusion can be made
by Galois cohomology computations, thus in particular showing directly that $rank_\Q(E)=0$ for any elliptic 
curve over $\Q$ with the same Galois data.

The elliptic curve over $\Q$ given by the equation
$$y^2=x^3+x^2-117x-541$$
has conductor $N=2^4\cdot 11^2$, CM, ($0$ rank), torsion $1$ and ordinary good reduction at $3$
(see the Cremona tables at 

\noindent http://johncremona.github.io/ecdata/).
Looking at the smallest number field $K$ over which $E$ has $\Z/9\times \Z/9$ torsion, one sees that
$$G=Gal(K/\Q)\cong \Z/6\times \Sigma_3.$$
The $3$-decomposition subgroup of $G$ is the normal subgroup 
$$G_3=\Z/6\times \Z/3.$$
The representation of $G$ on $T_3(E)/(9)$ can be described (up to isomorphism) as follows: the generator of the
$\Z/6$-factor acts by 
$$\left(
\begin{array}{rr}
2 &0\\0& 2
\end{array}
\right),$$
the generator of the $\Z/3$-subgroup of $\Sigma_3$ acts by
$$\left(
\begin{array}{rr}
4 &0\\0& 7
\end{array}
\right),$$
the generator $\sigma$ of a $\Z/2$-subgroup of $\Sigma_3$ acts by
$$\left(
\begin{array}{rr}
0&1\\1 &0
\end{array}
\right).$$
(The author obtained these results using SAGE.)
Let $\Gamma=Gal(\Q)$, and let $\Gamma_{(3)}\cong Gal(\Q_3)$ be a $3$-decomposition subgroup. Let
also $\Gamma_3$ be the pullback of $\Gamma$ by $G_3\r G$. Then we have inclusions
$$\Gamma_{(3)}\subset \Gamma_3\subset \Gamma.$$

\begin{lemma}
\label{lex}
The restriction 
$$H^1(\Gamma_3,T_3(E)/(9))\r H^1(\Gamma_{(3)},T_3(E)/(9))\cong
\Z/9\times \Z/9$$
is an isomorphism.
\end{lemma}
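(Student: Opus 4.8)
The plan is to establish the isomorphism in Lemma~\ref{lex} by a direct cohomological computation, exploiting the explicit description of the $G$-module $M=T_3(E)/(9)$ given above. First I would reduce the global-type group $H^1(\Gamma_3,M)$ to a group cohomology computation over the finite quotient. Since $M$ is a module over $G_3=\Z/6\times\Z/3$ pulled back to $\Gamma_3$, and the kernel of $\Gamma_3\to G_3$ acts trivially, the inflation-restriction sequence gives that $H^1(\Gamma_3,M)$ receives $H^1(G_3,M)$ via inflation, and the cokernel sits inside $H^0(G_3,H^1(\ker,M))=H^0(G_3,\mathrm{Hom}(\ker,M))$. Here $\ker=\mathrm{Gal}(\overline{\Q}/K)$, and because $M$ is a trivial $\ker$-module with $M\cong(\Z/9)^2$, one needs to understand the $G_3$-coinvariants of the continuous homomorphisms from the absolute Galois group of $K$ to $\Z/9$. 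The same reduction applies verbatim to $\Gamma_{(3)}\cong\mathrm{Gal}(\Q_3)$ and its finite quotient $G_3$ (note that, by the choice of curve, the $3$-decomposition group surjects onto all of $G_3$, so the finite quotient is literally the same).

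The second step is to compare the two sides term by term. Both the global side over $\Gamma_3$ and the local side over $\Gamma_{(3)}$ are built from $H^1(G_3,M)$ by inflation, together with a contribution from $H^1$ of the respective kernels. The restriction map in the lemma is compatible with these inflation-restriction sequences, so it suffices to check that (i) on the inflation part the map $H^1(G_3,M)\to H^1(G_3,M)$ is the identity (clear, since the finite quotient is the same), and (ii) the kernel-contribution terms match, i.e. the restriction $H^0(G_3,\mathrm{Hom}(G_K^{ab},\Z/9)\otim\dots)\to H^0(G_3,\mathrm{Hom}((G_{K_\mathfrak{p}})^{ab},\Z/9)\otimes\dots)$ is an isomorphism, where $\mathfrak{p}$ is the prime of $K$ over $3$. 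This is where the special structure of the module enters: the $G_3$-action on $M$ (via the matrices $\mathrm{diag}(2,2)$ and $\mathrm{diag}(4,7)$, both diagonal, hence $M$ decomposes $G_3$-equivariantly as a sum of two rank-one pieces on which $G_3$ acts by characters $\chi_1,\chi_2$) forces the only surviving coinvariants to come from the parts of $G_K^{ab}$ on which $G_3$ acts by $\chi_i^{-1}$, and by local class field theory at $3$ (and the fact that $K_\mathfrak{p}/\Q_3$ already realizes $G_3$) these isotypic pieces are the same globally and locally. Concretely I expect both $H^1$'s to be computed to be $\Z/9\times\Z/9$ and the map to be visibly the identity on generators.

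The most efficient route, and the one I would actually carry out, is to compute $H^1(G_3,M)$ directly by hand using the product structure $G_3=\Z/6\times\Z/3$ and the Künneth/LHS spectral sequence for this product, then invoke the already-cited Poitou--Tate machinery (Lemma~\ref{lpt}) and local duality to identify the global restriction with the local group. Because $G_3$ is abelian of order $18$ and $M\cong(\Z/9)^2$ has order coprime to $2$, the order-$6$ factor contributes only through its order-$3$ part, so effectively one is computing the cohomology of $(\Z/3)^2$ (a subgroup of index $2$) acting on $(\Z/9)^2$; since the generator of $\Z/6$ acts by $2=-1$ on $M$ after reduction mod~$3$ — wait, $2\bmod 3 = 2 = -1$, so there is a nontrivial sign, but $2^2=4\equiv1\bmod 3$ — one must track this. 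The diagonal action makes $H^*$ a sum over the two characters, and a short computation gives $H^1(G_3,M)$ of order $81$, with the restriction to the decomposition subgroup being an isomorphism because at $p=\ell=3$ the local cohomology has exactly the ``right half'' dimension predicted by Lemma~\ref{l2} and the global piece maps onto it isomorphically by the explicit matching of characters.

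The main obstacle will be bookkeeping: keeping straight which isotypic component of $M$ contributes to which component of the answer, and verifying that the restriction map is surjective (equivalently, that no class in $H^1(\Gamma_3,M)$ dies on restriction to $\Gamma_{(3)}$ and that every local class lifts). The surjectivity is the crux; it is not a formal consequence but uses that the decomposition group $G_3$ is as large as possible in $G$ (so the local extension $K_\mathfrak{p}/\Q_3$ already sees the whole relevant Galois action), together with the fact that $H^2$ of the relevant pieces vanishes for dimension reasons — which is exactly the input that makes the comparison an isomorphism rather than merely an injection. I expect injectivity to be essentially immediate from inflation-restriction plus the triviality of $H^0(\Gamma_3,M)/H^0(\Gamma_{(3)},M)$, which holds because $M^{G_3}=0$ by the explicit matrices.
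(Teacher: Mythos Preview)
Your strategy has a concrete error that unravels the whole plan. You assert that ``a short computation gives $H^1(G_3,M)$ of order $81$''; in fact $H^1(G_3,M)=0$. To see this, decompose $M=M_1\oplus M_2$ as you do, and note that on the $3$-Sylow $P=\langle a^2,b\rangle\cong(\Z/3)^2$ both generators act on $M_1\cong\Z/9$ by $4$. A direct crossed-homomorphism count gives $|Z^1(P,M_1)|=9$, $|B^1(P,M_1)|=3$, so $H^1(P,M_1)\cong\Z/3$. The element $a^3$ of order $2$ acts trivially on $P$ and by $-1$ on $M_1$, hence by $-1$ on $H^1(P,M_1)\cong\Z/3$, whose invariants are $0$. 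The same holds for $M_2$. Thus the inflation term in your inflation--restriction sequence vanishes on \emph{both} the global and the local side, and the entire content of the lemma is precisely the comparison of the ``kernel contributions'' $H^0(G_3,\mathrm{Hom}(G_K,M))$ and $H^0(G_3,\mathrm{Hom}(G_{K_\mathfrak p},M))$ that you dismiss with the phrase ``these isotypic pieces are the same globally and locally.'' That is not a proof; it is a restatement of what must be shown, and there is no reason \emph{a priori} why the relevant $\chi_i^{-1}$-isotypic pieces of $G_K^{ab}/9$ and $G_{K_{\mathfrak p}}^{ab}/9$ should coincide.

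The paper's argument avoids this trap entirely. Instead of splitting via inflation--restriction, it applies Shapiro's lemma: let $Q$ be the module coinduced from the $\Gamma_3$-module $T_3(E)/(9)$ up to $\Gamma$, so that $H^1(\Gamma_3,T_3(E)/(9))\cong H^1(\Gamma,Q)$. Since $|Q|=9^4$, one can feed $Q$ into the Poitou--Tate sequence \rref{ept} and read off $|H^1(\Gamma,Q)|=9^2$. For the restriction map itself, one chooses a $G_3$-equivariant section $T_3(E)/(9)\to Q$ of the evaluation map, and compares the resulting square with the Poitou--Tate sequence at $p=3$. The point is that Poitou--Tate supplies the global-to-local control you were trying to manufacture by hand via class field theory of $K$; your approach would need an independent and substantial argument to reach the same conclusion.
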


\begin{proof}
Once again, we know that the group $H^1(\Gamma_{(3)},T_3(E)/(9))$ is isomorphic to 
$\Z/9\times \Z/9$ by the Euler characteristic formula (\cite{serre}, 5.7, Theorem 5). The corresponding cohomology 
group of 
$\Gamma_3$ can be computed in the following way: let $Q$ be the $\Gamma$-module coinduced from the
$\Gamma_3$-module $T_3(E)/(9)$. Then $|Q|=9^4$, so by the Poitou-Tate exact sequence \rref{ept},
$$H^1(\Gamma,Q)\cong H^1(\Gamma_3, T_3(E)/(9))\cong (\Z/9)^2.$$
To see that the restriction is an isomorphism, pick a $G_3$-equivariant section
$$\diagram
T_3(E)/(9)\drto_{Id}\rto & Q\dto\\
& T_3(E)/(9)
\enddiagram$$
and consider the commutative diagram
$$
\diagram
H^1(\Gamma,Q)\dto_\cong\rto & H^1(\Gamma_{(3)},Q)\\
H^1(\Gamma_3,T_3(E)/(9))\rto & H^1(\Gamma_{(3)}, T_3(E)/(9))\uto
\enddiagram
$$
and the Poitou-Tate exact sequence \rref{ept}.
\end{proof}

Note that $T_3(E)/(9)$ as a $G_3$-module actually splits as
$$T_3(E)/(9)\cong M\oplus M^\prime$$
where both $M, M^\prime$, as abelian groups, are isomorphic to $\Z/9$.
The first cohomologies of both $\Gamma_{(3)}$ and $\Gamma_3$ on $M$ and $M^\prime$
are therefore isomorphic to $\Z/9$. 

\vspace{3mm}
Now the image of 
$$\Z/9\cong H^1(\Gamma,T_3(E)/(9))\subset H^1(\Gamma_3,T_3(E)/(9))\cong \Z/9\oplus \Z/9$$
is, by the Hochschild-Serre spectral sequence, invariant under the action of the involution $\sigma$.
We see that neither of the subgroups $H^1(\Gamma_3,M)$ nor $H^1(\Gamma_3,M^\prime)$ satisfies
this property, since $\sigma$ switches them.

This describes an obstruction modulo $3$, but it lifts to a non-torsion obstruction. In fact, Kato \cite{kato} in 
Chapter 16 relates $L_{p}$ to the $p$-adic $L$-function in case of newforms with ordinary good reduction.
In our case, the Galois cohomology calculation shows that the $3$-adic $L$-function is non-zero $\mod 3$. 
It is interesting, however, that the calculation is elementary, giving hope that non-vanishing of the obstruction can
be shown in contexts where modularity is not known, for example over more general number fields or
for abelian varieties.

\vspace{10mm}

\end{document}